\documentclass[12pt, a4paper, abstracton, bibliography=totoc]{scrartcl}
\pdfoutput=1

\usepackage{amsmath, amsthm, amsfonts, amssymb}
\usepackage[utf8]{inputenc}
\usepackage{slashed} 
\usepackage[all, cmtip]{xy}
\usepackage{hyperref} 

\let\counterwithout\relax

\usepackage{chngcntr} 
\usepackage{graphicx} 
\usepackage[english]{babel}
\usepackage{microtype}

\newcommand{\IN}{\mathbb{N}}
\newcommand{\IZ}{\mathbb{Z}}
\newcommand{\IQ}{\mathbb{Q}}
\newcommand{\IR}{\mathbb{R}}
\newcommand{\IC}{\mathbb{C}}

\newcommand{\id}{\operatorname{id}}

\newcommand{\card}{\#}

\newcommand{\ch}{\operatorname{ch}}
\newcommand{\vol}{\operatorname{vol}}
\newcommand{\ind}{\operatorname{ind}}

\newcommand{\RHufG}{RH^\mathrm{uf,\Gamma}}

\newcommand{\RHulfG}{RH^\mathrm{ulf,\Gamma}}

\newcommand{\Huf}{H^\mathrm{uf}}
\newcommand{\Hulf}{H^\mathrm{ulf}}
\newcommand{\Hufpol}{H^\mathrm{pol}}

\newcommand{\HufG}{H^\mathrm{uf,\Gamma}}
\newcommand{\HufL}{H^\mathrm{uf,\Lambda}}
\newcommand{\HulfG}{H^\mathrm{ulf,\Gamma}}
\newcommand{\HulfL}{H^\mathrm{ulf,\Lambda}}
\newcommand{\Cuf}{C^\mathrm{uf}}
\newcommand{\Culf}{C^\mathrm{ulf}}
\newcommand{\Cufpol}{{C}^\mathrm{pol}}

\newcommand{\uf}{\mathrm{uf}}

\newcommand{\HbdR}{H_{b, \mathrm{dR}}}

\newcommand{\length}{\mathrm{length}}

\newcommand{\PD}{\operatorname{PD}}

\newcommand{\Frechet}{Fr\'{e}chet }
\newcommand{\Poincare}{Poincar\'{e} }

\newcommand{\alg}{\mathrm{alg}}
\newcommand{\Kalg}{K^\alg}

\def\colim{\mathop{\mathrm{colim}}\nolimits}

\newcommand*{\largecdot}{\raisebox{-0.25ex}{\scalebox{1.2}{$\cdot$}}}

\newtheorem{thm}{Theorem}[section]
\newtheorem*{thm*}{Theorem}
\newtheorem*{mainthm*}{Main Theorem}

\newtheorem*{roesthm*}{Roe's Index Theorem}

\newtheorem*{atiyahsingerthm*}{Atiyah--Singer Index Theorem}

\newtheorem{cor}[thm]{Corollary}
\newtheorem*{cor*}{Corollary}
\newtheorem*{cor310*}{Corollary~\ref{cor33223}}
\newtheorem{lem}[thm]{Lemma}
\newtheorem{prop}[thm]{Proposition}

\newtheorem{question}[thm]{Question}
\theoremstyle{definition}
\newtheorem{defn-alt}[thm]{Definition}
\newtheorem{defns-alt}[thm]{Definitions}
\newtheorem{nota-alt}[thm]{Notation}
\newtheorem{rem-alt}[thm]{Remark}
\newtheorem{example-alt}[thm]{Example}
\newtheorem{examples-alt}[thm]{Examples}
\newtheorem{nonexample-alt}[thm]{Non-Example}

\newenvironment{defn} 
{%
	\pushQED{\qed}\begin{defn-alt}}
	{\popQED\end{defn-alt}}

\newenvironment{defns} 
{%
	\pushQED{\qed}\begin{defns-alt}}
	{\popQED\end{defns-alt}}

\newenvironment{example} 
{%
	\pushQED{\qed}\begin{example-alt}}
	{\popQED\end{example-alt}}

\newenvironment{examples} 
{%
	\pushQED{\qed}\begin{examples-alt}}
	{\popQED\end{examples-alt}}

\newenvironment{nonexample} 
{%
	\pushQED{\qed}\begin{nonexample-alt}}
	{\popQED\end{nonexample-alt}}

\newenvironment{rem} 
{%
	\pushQED{\qed}\begin{rem-alt}}
	{\popQED\end{rem-alt}}
	
{%
	\pushQED{\qed}\begin{nota-alt}}
	{\popQED\end{nota-alt}}

\numberwithin{equation}{section}

\counterwithout{footnote}{section}

\setcounter{secnumdepth}{2} 

\makeatletter
\def\blfootnote{\gdef\@thefnmark{}\@footnotetext}
\makeatother

\begin{document}

\title{Wrong way maps in uniformly finite homology and homology of groups}
\author{Alexander Engel}
\date{}
\maketitle

\vspace*{-3.5\baselineskip}
\begin{center}
\footnotesize{
\textit{
Fakultät für Mathematik\\
Universität Regensburg\\
93040 Regensburg, Germany\\
\href{mailto:alexander.engel@mathematik.uni-regensburg.de}{alexander.engel@mathematik.uni-regensburg.de}
}}
\end{center}

\begin{abstract}
Given a non-compact Riemannian manifold $M$ and a submanifold $N \hookrightarrow M$ of codimension $q$, we will construct under certain assumptions on both $M$ and $N$ a wrong way map $\Huf_\ast(M) \to \Huf_{\ast-q}(N)$ in uniformly finite homology.

Using an equivariant version of the construction and applying it to universal covers, we will construct a map $H_\ast(B\pi_1 M) \to H_{\ast-q}(B\pi_1 N)$.

As applications we discuss obstructions to the existence of positive scalar curvature metrics and to inessentialness.
\blfootnote{\textit{$2010$ Mathematics Subject Classification.} Primary:\ 55N35; Secondary:\ 53C23,\ 58J22.}
\blfootnote{\textit{Keywords and phrases.} Coarse geometry, uniformly finite homology, homology of groups, essentialness, positive scalar curvature, wrong way maps.}
\end{abstract}

\tableofcontents

\section{Introduction}

Let us start by stating the topological version of the main result of the present paper. We will not state the rough\footnote{``Rough'' means ``uniformly coarse'' in this context. The term ``rough geometry'' was coined by Roe \cite[Exercise 1.12]{roe_lectures_coarse_geometry}.} version of the main result (Theorem~\ref{thm:constr_loc_map}) in the introduction since it requires several definitions which will be given in Section~\ref{secjk112323}.

In the following, we consider homology groups for some fixed coefficient group and correspondingly orientability is meant with respect to these coefficients.

\begin{mainthm*}[topological version, Theorem~\ref{thm:loc_BG}]
Let $M$ be a closed and connected manifold and let $N \hookrightarrow M$ be a closed, connected submanifold of codimension $q \ge 1$ such that the inclusion induces an injective map on fundamental groups $\pi_1(N) \hookrightarrow \pi_1(M)$.

Assume further one of the following:
\begin{enumerate}
\item That the normal bundle $\nu$ of the embedding $N \hookrightarrow M$ is oriented (for some fixed coefficient group) and, in the case $q \ge 2$, that $\pi_i(M) = 0$ for $2 \le i \le q$.
\item That the normal bundle $\nu$ of the embedding $N \hookrightarrow M$ is trivial.

Further, in the case $q = 2$ that $\pi_2(N) \to \pi_2(M)$ is surjective, and in the case $q > 2$ that $\pi_i(M) = 0$ for $2 \le i \le q-1$ and that $\pi_q(N) \to \pi_q(M)$ is surjective.
\end{enumerate}

Then we construct a homomorphism $H_\ast(B\pi_1 M) \to H_{\ast-q}(B\pi_1 N)$ such that the diagram
\[\xymatrix{
H_\ast(M) \ar[r] \ar[d] & H_\ast(D\nu, S\nu) \ar[r] & H_{\ast-q}(N) \ar[d]\\
H_\ast(B\pi_1 M) \ar[rr] & & H_{\ast-q}(B\pi_1 N)}\]
commutes, where the map $H_\ast(M) \to H_\ast(D\nu, S\nu)$ is the Thom--Pontryagin collapse, and the map $H_\ast(D\nu, S\nu) \to H_{\ast-q}(N)$ is cap-product with the Thom class of $\nu$. The vertical maps are induced by the classifying maps.\footnote{Recall that $B \pi_1 M$ is the classifying space for principal $\pi_1 M$-bundles. The classifying map $M \to B \pi_1 M$ is the one which classifies the universal cover of $M$; it is unique up to homotopy.}
\end{mainthm*}

One can show, using surgery theory, that for any finitely presented group $G$ and any finitely presented subgroup $H$ of it, the geometric situation of the main theorem can be achieved in codimension 2, providing a wrong way map $H_*(BG) \to H_{*-2}(BH)$ \cite{nitsche}.

\paragraph{Application A} As a first example of an application we will treat index theory.

Recall that if $M$ is spin, then it is oriented for $KO$-theory, i.e., has a fundamental class $[M] \in KO_\ast(M)$. The higher $\hat A$-genera of $M$ are defined as the image of $[M]$ under the composition $KO_\ast(M) \xrightarrow{\ch} H_\ast(M) \to H_\ast(B\pi_1 M)$, where we use $\IQ$-coefficients for the homology groups. The strong Novikov conjecture asserts that if $M$ admits a positive scalar curvature metric, then the higher $\hat A$-genera of $M$ must vanish.

Under the second set of assumptions of the main theorem, if additionally $M$ and $N$ are spin, the wrong way map $H_\ast(B\pi_1 M) \to H_{\ast-q}(B\pi_1 N)$ maps the higher $\hat A$-genera of~$M$ to the ones of $N$ (for this we need triviality of the normal bundle). We conclude:
\begin{center}
\noindent
\parbox[c][1.5em][c]{0.95\textwidth}{
\emph{If the strong Novikov conjecture is true for $\pi_1(M)$, then the higher $\hat A$-genera of $N$ are obstructions to the existence of metrics of positive scalar curvature on $M$.}
}
\end{center}

\paragraph{Application B} As a second application let us treat essentialness of oriented manifolds. Recall that an oriented manifold $M$ is called essential if the image $\varphi_\ast[M] \in H_\ast(B \pi_1 M)$ of its fundamental class $[M] \in H_\ast(M)$ under its classifying map $\varphi\colon M \to B \pi_1 M$ is non-zero. An interesting property of essential Riemannian manifolds is, e.g., Gromov's systolic inequality \cite{gromov_filling}: $\operatorname{sys}(M)^n \le C_n \vol(M)$, where $n = \dim(M)$, $C_n$ is a universal constant depending only on the dimension, and $\operatorname{sys}(M)$ denotes the smallest length of a homotopically non-trivial loop in $M$.

Now assume that we are in the geometric situation of the main theorem and that both $M$ and $N$ are orientable. From the fact that the composition of the Thom isomorphism with the Thom--Pontryagin collapse maps $[M]$ to $[N]$ we immediately conclude: 
\begin{center}
\noindent
\parbox[c][0.5em][c]{0.95\textwidth}{
\emph{If $N$ is essential, then $M$ must be also essential.}
}
\end{center}

An application of this criterion for essentialness is the following (the idea for this arose out of a discussion with Bernhard Hanke):
\begin{center}
\noindent
\parbox[c][4em][c]{0.95\textwidth}{
\emph{Assume that we are in the geometric situation of the main theorem, $M$ is spin, $N$ is orientable and essential, and the strong Novikov conjecture holds for $\pi_1(M)$, then $M$ does not admit a metric of positive scalar curvature.}
}
\end{center}

The interesting point is that we do not have to assume that the submanifold is spin. But in similar results like the ones of Hanke--Pape--Schick \cite{hanke_pape_schick} or Zeidler \cite{zeidler_codimension}, which we will survey in Section~\ref{secnlm24ed}, this is the case.

\paragraph{Question C} The following arose out of a discussion with Micha\l{} Marcinkowski.

Essentialness is a notion of largeness, but we also have enlargeability, hypereuclideaness or macroscopical largeness (many of these notions were introduced by Gromov, and enlargeability by Gromov--Lawson \cite{GL_enlargeable}). In Application~B we saw that if $M$ is small with respect to the notion of essentialness, then the submanifold $N$ must be also small. This leads to the question whether the same is true for the other notions of largeness.

Let us try to argue that for, e.g., hypereuclideaness this is probably not true. Recall \cite[Page~19]{gromov_pmsh} that a Riemannian manifold $M$ is called hypereuclidean if there exists a proper Lipschitz map $f \colon M \to \IR^n$ of non-zero degree. A closed manifold is called hypereuclidean if its universal cover is hypereuclidean, where we equip the closed manifold with some Riemannian metric and its universal cover with the pull-back metric (being hypereuclidean is then independent of the metric we put on the closed manifold).

Given an orientable aspherical manifold $M$ of dimension $n$, we can find an embedded $S^1 \hookrightarrow M$ which generates $\IZ < \pi_1 M$. Then all assumptions of our above main theorem are satisfied. Now $S^1$ is hypereuclidean and so we would have our counter-example finished if we would know that there exist aspherical manifolds which are not hypereuclidean. Unfortunately, as far as the author knows this is an open problem. The best guess is that an aspherical manifold containing coarsely an expander in its fundamental group should not be hypereuclidean (one can use Sapir's construction \cite{sapir} to get such a manifold), but this is currently unproven. Let us therefore phrase this as an open question:
\begin{center}
\noindent
\parbox[c][0.5em][c]{0.95\textwidth}{
\emph{Do there exist aspherical manifolds which are not hypereuclidean?}
}
\end{center}

\subsection{Related results by other authors}\label{secnlm24ed}

Let us summarize some results about constructing wrong way maps in homology theories associated to groups. The story started with the following result of Hanke--Pape--Schick that raised the author's interest. Note that a precursor of their result were Gromov's and Lawson's results in \cite[Section 7]{gromov_lawson_psc}.

\begin{thm*}[{\cite{hanke_pape_schick}}]
Let $M$ be a closed connected spin manifold with $\pi_2(M) = 0$.

Assume that $N \subset M$ is a codimension two submanifold with trivial normal bundle and that the induced map $\pi_1(N) \to \pi_1(M)$ is injective. Assume also that the Rosenberg index of $N$ does not vanish: $0 \not= \alpha(N) \in K_\ast(C^\ast \pi_1 (N))$.

Then $M$ does not admit a Riemannian metric of positive scalar curvature.
\end{thm*}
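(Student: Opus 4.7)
The plan is to derive a contradiction from the existence of a PSC metric on $M$, by constructing a $K$-theoretic wrong-way transfer $\tau \colon K_\ast(C^\ast \pi_1 M) \to K_{\ast-2}(C^\ast \pi_1 N)$ that sends the Rosenberg index $\alpha(M)$ to $\alpha(N)$. Since a PSC metric forces $\alpha(M) = 0$ via the Lichnerowicz--Rosenberg formula (the spin Dirac operator on $\tilde M$ twisted by the Mishchenko bundle $\mathcal{L}_M = \tilde M \times_{\pi_1 M} C^\ast \pi_1 M$ becomes invertible on a $\pi_1 M$-invariant lift of the PSC metric), the existence of $\tau$ with $\tau(\alpha(M)) = \alpha(N)$ together with $\alpha(N) \ne 0$ yields the contradiction.

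First I would set up the geometry on the universal cover $p\colon \tilde M \to M$. Writing $\tilde N := p^{-1}(N)$, the hypothesis $\pi_1 N \hookrightarrow \pi_1 M$ implies that each connected component of $\tilde N$ is the universal cover of $N$, and the components are permuted freely and transitively by the coset space $\pi_1 M / \pi_1 N$ with stabilizer $\pi_1 N$. The assumption $\pi_2 M = 0$ guarantees that this picture is homotopically rigid: the classifying map $M \to B\pi_1 M$ can be chosen so that $N$ maps to $B \pi_1 N \subset B \pi_1 M$ compatibly with the subgroup inclusion. Because $\nu_N$ is trivial, a $\pi_1 M$-equivariant tubular neighborhood theorem produces an equivariant diffeomorphism of a neighborhood $U$ of $\tilde N$ with $\tilde N \times \IR^2$.

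The heart of the argument is the construction of $\tau$. Using the trivialization $U \cong \tilde N \times \IR^2$, a $\pi_1 M$-invariant cutoff concentrated on $U$, and Morita equivalence between the $\pi_1 M$-equivariant $C^\ast$-algebras on $\tilde N$ and on $\tilde N \times_{\pi_1 N} \pi_1 M$, one can restrict a class on $\tilde M$ to a class on $\tilde N$ seen as a $\pi_1 N$-manifold, then Kasparov-multiply with the Dirac/Bott class $\beta \in KK(\IC, C_0(\IR^2))$ of the transverse $\IR^2$. The resulting composite
\[ \tau \colon K_\ast(C^\ast \pi_1 M) \longrightarrow K_\ast(C^\ast \pi_1 N \otimes C_0(\IR^2)) \xrightarrow{\ \otimes \beta\ } K_{\ast-2}(C^\ast \pi_1 N) \]
is the desired wrong-way map. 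To show $\tau(\alpha(M)) = \alpha(N)$ one computes that, in the tube $\tilde N \times \IR^2$, the twisted Dirac operator on $\tilde M$ decomposes (up to lower-order perturbation) as the external product of the twisted Dirac operator on $\tilde N$ with the Euclidean Dirac operator on $\IR^2$; multiplicativity of Kasparov products for external Dirac-operator classes then identifies the image with $\alpha(N)$.

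The main obstacle is the construction of $\tau$ and the verification of the transfer identity $\tau(\alpha(M)) = \alpha(N)$, which requires careful bookkeeping with the infinitely many components of $\tilde N$ and with the induction of $C^\ast \pi_1 N$-modules along $\pi_1 N \hookrightarrow \pi_1 M$. The topological hypotheses enter precisely here: without $\pi_1 N \hookrightarrow \pi_1 M$ the components of $\tilde N$ would not be universal covers of $N$ and the target of $\tau$ would not be $K_\ast(C^\ast \pi_1 N)$, and without $\pi_2 M = 0$ (or the triviality of $\nu_N$) one would not obtain a clean equivariant splitting of a tubular neighborhood on which the external Kasparov product computation can be carried out.
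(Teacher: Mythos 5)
Your proposal rests on constructing a $K$-theoretic wrong-way map $\tau\colon K_\ast(C^\ast\pi_1 M) \to K_{\ast-2}(C^\ast\pi_1 N)$ with $\tau(\alpha(M)) = \alpha(N)$. This is precisely what the surrounding text of the paper flags as not achievable by the Hanke--Pape--Schick technique: immediately after quoting their theorem, the author writes that ``it would be desirable to construct under the above assumptions a map $K_{\ast+2}(C^\ast \pi_1(M)) \to K_\ast(C^\ast \pi_1(N))$ which maps $\alpha(M)$ to $\alpha(N)$. This is still an open problem.'' So your plan is not a reconstruction of the cited proof but an attempt at a strictly stronger statement that is, to date, unresolved; the HPS argument is indirect and deliberately avoids producing such a transfer.

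The concrete gap is in the sentence ``one can restrict a class on $\tilde M$ to a class on $\tilde N$ seen as a $\pi_1 N$-manifold.'' There is no such restriction map on $K$-theory. The subgroup inclusion $\pi_1 N \hookrightarrow \pi_1 M$ only gives a $\ast$-homomorphism $C^\ast\pi_1 N \to C^\ast\pi_1 M$, hence a map $K_\ast(C^\ast\pi_1 N) \to K_\ast(C^\ast\pi_1 M)$ in the \emph{wrong} direction; a transfer the other way does not exist in general (the index $[\pi_1 M : \pi_1 N]$ is typically infinite). Relatedly, the tube $U \cong \tilde N \times \IR^2$ around a single component of $p^{-1}(N)$ is only $\pi_1 N$-invariant, not $\pi_1 M$-invariant, so a ``$\pi_1 M$-invariant cutoff concentrated on $U$'' does not exist; the $\pi_1 M$-invariant object is the union of the tubes around \emph{all} components, which is an induced space $\tilde N \times_{\pi_1 N} (\pi_1 M \times \IR^2)$, and multiplying by the Mishchenko bundle there lands you back in $K_\ast(C^\ast\pi_1 M)$ rather than $K_\ast(C^\ast\pi_1 N)$. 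Finally, the index class $\alpha(M)$ is defined by a Dirac operator on all of $\tilde M$; truncating it to a tube produces boundary terms, and there is no Kasparov-level statement that identifies such a truncation with the external product $\alpha(N) \otimes \beta$. What HPS actually do is pass to the intermediate covering of $M \setminus (\text{tube around } N)$ associated to $\pi_1(N\times S^1)$ (using $\pi_2(M) = 0$ for $\pi_1$-injectivity of the boundary inclusion and triviality of $\nu$ to identify the boundary with $N \times S^1$), and then run a relative/partitioned-manifold index argument there to contradict $\alpha(N) \ne 0$; no map $K_\ast(C^\ast\pi_1 M) \to K_{\ast-2}(C^\ast\pi_1 N)$ is produced. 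Zeidler's later results, also cited in this section of the paper, construct such a transfer only under additional hypotheses (codimension one, or fiber-bundle structure with finite asymptotic dimension of the base), which illustrates that the map your proof takes for granted is exactly where the difficulty lies.
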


In \cite[Remark 1.9]{hanke_pape_schick} it was stated that if one assumes the strong Novikov conjecture for $\pi_1(M)$, then one can conclude indirectly (using the stable Gromov-Lawson-Rosenberg conjecture) that $\alpha(M)$ must be also non-zero. The technique employed in the proof of the above theorem could not show this directly. Indeed, it would be desirable to construct under the above assumptions a map $K_{\ast+2}(C^\ast \pi_1(M)) \to K_\ast(C^\ast \pi_1(N))$ which maps $\alpha(M)$ to $\alpha(N)$. This was recently achieved by Kubota \cite{kubota}.

Zeidler could solve the problem of constructing such a map in the following cases:

\begin{thm*}[{\cite[Theorems 1.5 \& 1.7]{zeidler_codimension}}]
A map $K_{\ast+q}(C^\ast \pi_1 (M)) \to K_\ast(C^\ast \pi_1(N))$ with $\alpha(M) \mapsto \alpha(N)$ exists in the following case, where $q$ is the codimension of $N \to M$:
\begin{itemize}
\item $N \to M \to B$ is a fiber bundle, $B$ is aspherical and $\pi_1(B)$ has finite asymptotic dimension.
\item $N$ has codimension $q=1$ and $\pi_1(N) \to \pi_1(M)$ is injective.
\end{itemize}
\end{thm*}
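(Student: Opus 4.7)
The plan is to construct the wrong-way K-theory map separately in each case, in both cases by matching a geometric or group-theoretic decomposition of $M$ with an algebraic decomposition of $C^\ast \pi_1 M$.

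For the codimension-one case, the injectivity of $\pi_1(N) \hookrightarrow \pi_1(M)$ together with $q = 1$ implies, via Bass--Serre theory, that $\pi_1(M)$ splits as either an HNN-extension over $\pi_1(N)$ or as an amalgamated product with amalgamating subgroup $\pi_1(N)$, depending on whether $M \setminus N$ is connected. Either situation yields a six-term Pimsner--Voiculescu or Mayer--Vietoris exact sequence on $K_\ast$ of the associated group $C^\ast$-algebras, and the desired wrong-way map is the boundary homomorphism in that sequence, shifting degree by exactly one. To verify $\alpha(M) \mapsto \alpha(N)$ one chooses a bicollar $N \times (-1, 1) \subset M$, realises the Mishchenko--Fomenko index class of the twisted Dirac operator on $\widetilde M$ as a cycle that can be cut along the preimage of $N \times \{0\}$, and identifies its image under the boundary map with the analogous index class on $\widetilde N$---a standard relative-index argument in the Roe/Mishchenko framework.

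For the fiber bundle case $N \hookrightarrow M \to B$, the short exact sequence $1 \to \pi_1 N \to \pi_1 M \to \pi_1 B \to 1$ presents $\widetilde M$ as a $\pi_1 M$-space that fibers $\pi_1 M$-equivariantly over $\widetilde B$ with fibers diffeomorphic to $\widetilde N$. The plan is to ``cap with the fundamental class of $\widetilde B$''; this is where the finite asymptotic dimension of $\pi_1 B$ is essential, since it provides a finite-dimensional model of $E\pi_1 B$ and enough coarse-geometric control on $\widetilde B$ to produce a proper cocycle representing its coarse fundamental class that can be paired with Mishchenko--Fomenko index classes of operators on $\widetilde M$. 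The output should land in $K_{\ast-q}$ of an algebra Morita-equivalent to $C^\ast \pi_1 N$, and multiplicativity of the analytic index along the fibration yields $\alpha(M) \mapsto \alpha(N)$.

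In both cases I expect the main obstacle to be precisely this last compatibility with the Rosenberg index, rather than the construction of a K-theory map of the correct degree (which is essentially standard). Identifying the image of the Mishchenko--Fomenko class $\alpha(M)$ requires either a delicate relative-index calculation along a bicollar (codimension one) or a Kasparov product with a Bott-type class coming from $\widetilde B$ (fiber bundle case); it is at this step that the specific hypotheses---codimension one, or finite asymptotic dimension of $\pi_1 B$---become indispensable, since without them the relevant coarse fundamental class of $\widetilde B$ either is unavailable or fails to pair with $K_\ast(C^\ast \pi_1 M)$.
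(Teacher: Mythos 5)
This theorem is not proved in the present paper; it is quoted from Zeidler's work \cite{zeidler_codimension} in the ``Related results'' subsection, with no argument given here. There is therefore no in-paper proof to compare against, and your proposal must be measured against Zeidler's own arguments.

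Your codimension-one outline matches Zeidler's approach in all essentials. For a two-sided, $\pi_1$-injective hypersurface $N\subset M$, Bass--Serre theory does split $\pi_1(M)$ as an amalgamated free product or HNN extension over $\pi_1(N)$ (according as $N$ separates $M$ or not), and the wrong-way map is the boundary map in the associated Mayer--Vietoris or Pimsner--Voiculescu six-term sequence for the group $C^\ast$-algebras; the compatibility $\alpha(M)\mapsto\alpha(N)$ is then a partitioned-manifold/relative-index computation along the bicollar, as you say. The one hypothesis you should make explicit is two-sidedness of $N$ (equivalently, triviality of the normal line bundle): without it there is no bicollar and no graph-of-groups splitting of $\pi_1(M)$ over $\pi_1(N)$, and Zeidler does assume trivial normal bundle.

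Your fiber-bundle sketch is directionally right but has a genuine confusion at the decisive step. The class coming from the aspherical base $B$ that you want to pair against $\alpha(M)$ must live in $K$-homology or $KK$-theory, so that a Kasparov slant product can land back in $K_\ast(C^\ast\pi_1 N)$; a ``proper cocycle'' in the sense of Roe's coarse cohomology would at best produce a numerical or rational pairing, not a $K$-theory homomorphism. What finite asymptotic dimension of $\pi_1(B)$ actually buys you, via Yu's theorem, is the coarse Baum--Connes conjecture for $\pi_1(B)$---equivalently, a Dirac/dual-Dirac element for $\widetilde B$---and it is this $KK$-theoretic class, combined with a product formula for the Rosenberg index along the fibration, that Zeidler exploits. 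You do gesture at the correct mechanism (``Kasparov product with a Bott-type class''), but in your write-up this should \emph{replace}, not sit alongside, the coarse-cohomology pairing.
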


Working rationally, i.e., with the $\hat A$-genus instead of the $\alpha$-invariant, the author proved in an earlier paper the following result:

\begin{thm*}[{\cite{engel_rough}}]
Let $M$ be a closed, connected manifold with $\pi_1(M)$ virtually nilpotent and $\pi_i(M) = 0$ for $2 \le i \le q$. Assume furthermore that $N \subset M$ is a closed, connected submanifold of codimension $q$ and with trivial normal bundle.

Then $\hat A(N) \not= 0$ implies $\ind(\slashed D_X) \not= 0 \in K_\ast(C_u^\ast(X))$ for $X$ the universal cover of~$M$.
\end{thm*}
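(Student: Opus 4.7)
The plan is to transport the $K$-theoretic index from $K_\ast(C_u^\ast(X))$ into uniformly finite homology via a Chern character, apply a uniform version of the wrong way construction of this paper to extract information about the lift $\tilde N := p^{-1}(N) \subset X$, and then detect non-vanishing there via $\hat A(N)$. The vanishing $\pi_i(M) = 0$ for $2 \le i \le q$ makes $X$ at least $q$-connected; combined with the trivial normal bundle of $N$, this realizes the tubular neighborhood of $\tilde N$ uniformly as $\tilde N \times D^q \hookrightarrow X$, yielding a uniformly proper Thom--Pontryagin collapse at the level of $X$. The virtual nilpotence of $\pi_1(M)$ gives $X$ polynomial growth, which is what will be needed in the last step for a rational Chern character isomorphism.

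With the geometry in place, I would build the uniformly finite wrong way map $\Huf_\ast(X) \to \Huf_{\ast-q}(\tilde N)$ in direct parallel to Theorem~\ref{thm:constr_loc_map}: cap with the (bounded) Thom form of the trivialized normal bundle and retract through the tubular neighborhood. The key compatibility to verify is that this map takes the uniformly finite fundamental class $[X]_\uf$ to $[\tilde N]_\uf$ and, more importantly, intertwines with cap products against bounded characteristic classes. Since $\hat A(X) = p^\ast \hat A(M)$ is bounded and restricts to $\hat A(\tilde N) = p^\ast \hat A(N)$, the uniformly finite index formula $\ch(\ind(\slashed D_X)) = [X]_\uf \frown \hat A(X)$ in $\Huf_\ast(X;\IR)$ is transported to $[\tilde N]_\uf \frown \hat A(\tilde N)$ downstairs.

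It remains to detect non-vanishing and promote it back to $K$-theory. Since $\pi_1(M)$ acts freely on $\tilde N$ with quotient $N$, pairing the top-degree part of $[\tilde N]_\uf \frown \hat A(\tilde N)$ against a single fundamental domain recovers $\hat A(N) \ne 0$; hence the image class is non-trivial in $\Huf_\ast(\tilde N;\IR)$, and so $\ch(\ind(\slashed D_X)) \ne 0$. With $\pi_1(M)$ virtually nilpotent, $X$ has polynomial growth and the Chern Character Isomorphism Theorem yields a rational injection from $K_\ast(C_u^\ast(X))$ into polynomial uniformly finite homology, forcing $\ind(\slashed D_X) \ne 0$. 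The main obstacle is the middle step: making the wrong way map uniformly controlled on the non-compact cover and verifying its compatibility with the $K$-theoretic cap product. Triviality of the normal bundle is what allows a globally defined bounded Thom form, while $q$-connectedness of $X$ is what allows the collapse to be realized by a uniformly proper, bounded-propagation construction rather than merely locally.
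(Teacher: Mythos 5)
Your route differs from the one the paper describes for this theorem. The cited proof in \cite{engel_rough} proceeds via ``rough index pairings,'' i.e., by pairing $\ind(\slashed D_X)$ directly against a coarse cohomology class built from $N$, whereas you push $\chi(\ind(\slashed D_X))$ forward along the uniformly finite wrong way map to $\tilde N$ and try to detect the image there --- this is closer to the machinery the present paper develops in Section~\ref{seckjnsd} than to the original argument. The geometric reduction to $[\tilde N]_\uf \frown \hat A(\tilde N)$ is sound, and you correctly identify the roles of the trivial normal bundle (so $\hat A(X)|_{\tilde N} = \hat A(\tilde N)$) and of $q$-connectedness of $X$ (so the collapse is uniformly controlled).

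The detection step is where the proposal breaks. Pairing ``against a single fundamental domain'' does not define a functional on $\Huf_\ast(\tilde N)$: if $c = \partial b$ for a uniformly finite but non-equivariant chain $b$, the sum of the coefficients of $c$ over a fundamental domain need not vanish, so this pairing only descends to the equivariant group $\HufG_\ast(\tilde N)$, whereas $\chi$ lands in the non-equivariant $\Huf_\ast$. The tool that closes this gap is an invariant mean (\Folner averaging), which exists because $\pi_1(M)$ is amenable --- this is precisely where virtual nilpotence contributes something beyond polynomial growth. Moreover, since $\ind(\slashed D_X)$ lies in the completed $K_\ast(C_u^\ast(X))$ rather than in $\Kalg_\ast(\IC_u^\ast(X))$, both the wrong way map and the detection must be carried out at the level of $\Hufpol_\ast$ via Theorem~\ref{thm:continuity}, checking that the averaging functional is continuous for the polynomial semi-norms (it is, being dominated by $\|\largecdot\|_{\infty,0}$). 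Finally, the appeal to a ``rational injection'' from $K_\ast(C_u^\ast(X))$ into $\Hufpol$ is unnecessary: once $\chi(\ind(\slashed D_X)) \neq 0$ in $\Hufpol_\ast(\Gamma_M)$, the non-vanishing of $\ind(\slashed D_X)$ follows immediately because $\chi$ is a homomorphism, with no injectivity required.
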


The proof of the previous theorem proceeds via rough index pairings, i.e., non-vanishing of $\ind(\slashed D_X)$ is witnessed by pairing with a coarse cohomology class related to $N$.

It was later noticed that in the above case the $\hat A$-genus of $N$ can be written as a higher $\hat A$-genus of $M$, which explains the result. But this idea was used by Zeidler to improve the previous theorem of the author to the following version:

\begin{thm*}[{\cite[Section 3]{zeidler_codimension}}]
Let $M$ be a closed, connected spin manifold and $N \subset M$ a closed, connected submanifold of codimension $q$ with trivial normal bundle. Assume also that $\pi_i(M) = 0$ for $2 \le i \le q$.

Then higher $\hat A$-genera of $N$ arising by pulling back elements from $H^\ast(B \pi_1(M))$ via the composition $N \to M \to B \pi_1 (M)$ are higher $\hat A$-genera of $M$.
\end{thm*}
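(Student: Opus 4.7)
The plan is to express every higher $\hat A$-genus of $N$ coming from the composition $N \hookrightarrow M \xrightarrow{f_M} B\pi_1 M$ as a higher $\hat A$-genus of $M$, by pushing everything from $N$ over to $M$ via the embedding $\iota\colon N\hookrightarrow M$. The first step is to note that since $\nu$ is trivial we have $TM|_N \cong TN\oplus \underline{\IR}^q$, so $\hat A(N) = \iota^*\hat A(M)$. Consequently, for any $\alpha \in H^\ast(B\pi_1 M)$, naturality of cap products gives
\[\langle \hat A(N)\cup (f_M\circ\iota)^*\alpha,\,[N]\rangle = \langle \hat A(M)\cup f_M^*\alpha,\,\iota_\ast[N]\rangle.\]

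Next, Poincaré duality in $M$ lets us rewrite this pairing. Let $\tau_N\in H^q(M)$ denote the Poincaré dual of $N$, so that $\iota_\ast[N] = [M]\frown \tau_N$ (the normal bundle is trivial, hence certainly orientable). The genus then becomes
\[\langle \hat A(M)\cup f_M^*\alpha\cup \tau_N,\,[M]\rangle,\]
which is already a twisted $\hat A$-genus of $M$; what remains is to recognize the twist $f_M^*\alpha\cup\tau_N$ as coming from a single class on $B\pi_1 M$.

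The crucial step is to lift $\tau_N$ to $B\pi_1 M$, and this is where the hypothesis $\pi_i(M)=0$ for $2\le i\le q$ enters. Realizing $B\pi_1 M$ by attaching cells to $M$ to kill the higher homotopy groups, the first group one needs to touch is $\pi_{q+1}(M)$, so every attached cell has dimension at least $q+2$. Consequently the pair $(B\pi_1 M, M)$ has no relative cells below dimension $q+2$, and cellular cohomology forces $f_M^*\colon H^q(B\pi_1 M)\to H^q(M)$ to be an isomorphism. Pick $\beta\in H^q(B\pi_1 M)$ with $f_M^*\beta = \tau_N$.

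Substituting back, the original higher $\hat A$-genus of $N$ becomes $\langle \hat A(M)\cup f_M^*(\alpha\cup\beta),\,[M]\rangle$, which is precisely the higher $\hat A$-genus of $M$ associated with the class $\alpha\cup\beta\in H^\ast(B\pi_1 M)$. The main obstacle is this lifting step: without the homotopy vanishing hypothesis there is no reason for $\tau_N$ to lie in the image of $f_M^*$ in degree $q$, and then the twist witnessing the genus of $N$ cannot be absorbed into a characteristic number of $M$. Every other ingredient is pure naturality of Pontryagin classes, cap products and Poincaré duality.
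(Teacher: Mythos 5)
The paper does not reprove this statement; it cites it to Zeidler, and the paragraph immediately preceding the citation explains that the guiding idea is precisely the one you execute: rewrite the (higher) $\hat A$-genus of $N$ as a higher $\hat A$-genus of $M$ by absorbing the Poincar\'e dual of $N$ into the twist, using the vanishing of the intermediate homotopy groups of $M$ to lift that dual to $B\pi_1 M$. Your reconstruction is correct. The three ingredients fit together exactly as claimed: (i) triviality of $\nu$ gives $\hat A(N)=\iota^*\hat A(M)$; (ii) the projection formula together with $\iota_*[N]=[M]\frown\tau_N$ turns the pairing over $[N]$ into $\langle\hat A(M)\cup f_M^*\alpha\cup\tau_N,[M]\rangle$; and (iii) since $\pi_i(M)=0$ for $2\le i\le q$, the map $f_M\colon M\to B\pi_1 M$ is $(q+1)$-connected, so the mapping cylinder is built from $M$ by attaching cells of dimension at least $q+2$, whence $H^q(B\pi_1 M,M)=H^{q+1}(B\pi_1 M,M)=0$ and $f_M^*\colon H^q(B\pi_1 M)\to H^q(M)$ is an isomorphism, producing $\beta$ with $f_M^*\beta=\tau_N$ and the final identity $\langle\hat A(M)\cup f_M^*(\alpha\cup\beta),[M]\rangle$. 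One small presentational point: in step (ii) what you actually need is that $N$ itself is oriented (so that $[N]$ and $\tau_N$ are defined); this follows because $M$ is oriented and $\nu$ is trivial, hence $TN$ inherits an orientation --- the phrase ``the normal bundle is trivial, hence certainly orientable'' is not quite the reason, though it is part of it. This proof is essentially the same as the cited one; by contrast, the present paper's own Theorem~\ref{thm:loc_BG} goes further (producing a map $H_*(B\pi_1 M)\to H_{*-q}(B\pi_1 N)$, so capturing \emph{all} higher $\hat A$-genera of $N$, not just those pulled back from $B\pi_1 M$) and is proved by an entirely different, coarse-geometric route.
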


In the above theorem one would like to have the same conclusion for all higher $\hat A$-genera of $N$, i.e., arising by pulling back elements from $H^\ast(B \pi_1 (N))$ via the classifying map of $N$. Note that the composition $N \to M \to B \pi_1 (M)$ is the same as $N \to B \pi_1 (N) \to B \pi_1(M)$, where the last map is induced by the map $\pi_1(N) \to \pi_1(M)$ induced from the inclusion $N \to M$. So if $H^\ast(B \pi_1 (M)) \to H^\ast(B \pi_1 (N))$ is not rationally surjective, the above result of Zeidler may miss some higher $\hat A$-genera of $N$. This is now basically what the topological version of the main theorem of the present paper corrects.

\paragraph{Acknowledgements}

I would like to thank Thomas Schick, Clara Löh, Ulrich Bunke and Bernhard Hanke for helpful discussions surrounding the ideas of this paper, and Rudolf Zeidler for pointing out an error in the previous versions. I also thank the anonymous referee for his or her comments.

I acknowledge the supported by the SFB 1085 ``Higher Invariants'' and the Research Fellowship EN 1163/1-1 ``Mapping Analysis to Homology'', both funded by the Deutsche Forschungsgemeinschaft DFG.

\section{Wrong way maps in uniformly finite homology}
\label{sec:2}

In this section we will discuss and prove the rough version of the topological main theorem from the introduction. The latter will be discussed in Section~\ref{subsecn2sd2} and follows immediately from an equivariant version of the rough theorem proved in Section~\ref{seck09723}.

\subsection{Rough version of the main theorem}\label{secjk112323}

Block and Weinberger \cite[Section 2]{block_weinberger_1} introduced uniformly finite homology whose definition we will recall in Definition~\ref{defnj23ds} below. The rough version of our main theorem will need a corresponding topological analogue of uniformly finite homology, which we are going to define first:

\begin{defn}[Uniformly locally finite homology]
Let $X$ be a metric space and $A$ a normed abelian group\footnote{That is to say, $A$ is equipped with a function $|\largecdot|\colon A \to \IR_{\ge 0}$ satisfying the triangle inequality.}. A uniformly locally finite $n$-chain with values in $A$ is a (possibly infinite) formal sum $\sum_{\alpha \in I} a_\alpha \sigma_\alpha$ with $a_\alpha \in A$ and $\sigma_\alpha \colon \Delta^n \to X$ continuous for all $\alpha \in I$, where $I$ is an index set, satisfying the following three conditions:
\begin{itemize}
\item $\sup_{\alpha \in I} |a_\alpha| < \infty$,
\item for every $r > 0$ there exists $K_r < \infty$ such that the ball $B_r(x)$ of radius $r$ around any point $x \in X$ meets at most $K_r$ simplices $\sigma_\alpha$, and
\item the family of maps $\{\sigma_\alpha\}_{\alpha \in I}$ is equicontinuous.
\end{itemize}
We equip the chain groups with the usual boundary operator and denote the resulting homology by $\Hulf_\ast(X)$, i.e., we will usually not mention the choice of $A$.
\end{defn}

\begin{rem}
Let $X$ be a simplicial complex of bounded geometry, i.e., the number of simplices in the link of each vertex is uniformly bounded, and equip $X$ with the metric derived from barycentric coordinates such that the edges all have length~$1$. Then we may define $L^\infty$-homology $H_\ast^\infty(X)$ by considering as chains (possibly infinite) formal sums of simplices with uniformly bounded coefficients (bounded geometry is needed so that the boundary operator is well-defined if the norm-function of $A$ is not bounded). Then we have $H^\infty_\ast(X) \cong \Hulf_\ast(X)$ and the map inducing the isomorphism is given by mapping a simplex of $X$ ``to itself'' but now viewed as a function $\Delta^n \to X$.
\end{rem}

\begin{defn}[Uniformly finite homology {\cite[Section 2]{block_weinberger_1}}]\label{defnj23ds}
Let $X$ be a metric space and $A$ a normed abelian group. A uniformly finite $n$-chain with values in $A$ is a (possibly infinite) formal sum $\sum_{\bar x \in X^{n+1}} a_{\bar x} {\bar x}$ with $a_{\bar x} \in A$ satisfying the following conditions:
\begin{itemize}
\item $\sup_{\bar x \in X^{n+1}} |a_{\bar x}| < \infty$,
\item for every $r > 0$ there exists $K_r < \infty$ such that $\#\{\bar x \in B_r(\bar y)\colon a_{\bar x} \not= 0\} < K_r$ for all points $\bar y \in X^{n+1}$, and
\item there exists $R >0$ such that $a_{\bar x} = 0$ if $d(\bar x, \Delta) > R$, where we denote the diagonal in $X^{n+1}$ by $\Delta = \{(x, \ldots, x)\colon x \in X\} \subset X^{n+1}$.
\end{itemize}
Equipping the chain groups $C_\ast^\uf(X)$ with the usual boundary operator (regarding a point $\bar x \in X^{n+1}$ as the vertices of an $n$-simplex in $X$) we get the uniformly finite homology groups $\Huf_\ast(X)$; again not mentioning $A$.
\end{defn}

Uniformly finite homology is functorial for so-called rough maps:

\begin{defn}[Rough maps]\label{defnjsd1f32}
A map $f\colon X \to Y$ between two metric spaces $X$ and $Y$ is called rough if for all $R > 0$ there exists an $S > 0$ such that we have the following two estimates:
\[d(x,y) < R \Rightarrow d(f(x),f(y)) < S \quad \text{ and } \quad d(f(x),f(y)) < R \Rightarrow d(x,y) < S.\]
Note that $f$ does not need to be continuous.
\end{defn}

We have a natural map $\Hulf_\ast(X) \to \Huf_\ast(X)$ by mapping a simplex $\sigma\colon \Delta^n \to X$ to its $(n+1)$-tuple of vertices $(\sigma(0), \ldots, \sigma(n)) \in X^{n+1}$.

\begin{defn}[Equicontinuously $q$-connected spaces]
\label{defn:equi_q_conn}
A metric space $X$ is said to be equicontinuously $q$-connected if for every $i \le q$ we have the following: any equicontinuous collection of maps $\{S^i \to X\}$, where $S^i \subset \IR^{i+1}$ is the standard $i$-sphere of radius $1$, is equicontinuously contractible.
\end{defn}

\begin{example}\label{exjnksd23}
Let $M$ be a closed, connected, Riemannian manifold with $\pi_i(M) = 0$ for $2 \le i \le q$. Then the universal cover of $M$ will be equicontinuously $q$-connected if equipped with the pull-back metric.
\end{example}

\begin{question}\label{question:uniform_implies_equi}
Under which conditions does it follow that uniform $q$-connectedness\footnote{This notion is derived from the notion of uniform contractibility, which is more often used in literature. It means that every continuous map $S^i \to B_r(x)$ is nullhomotopic in $B_{s}(x)$, where $s$ only depends on $r$ but not on $x \in X$.} implies equicontinuous $q$-connectedness?

Concretely, does this hold for simplicial complexes equipped with the natural metric derived from barycentric coordinates?
\end{question}

\begin{prop}\label{prop:contr_iso}
Let $X$ be an equicontinuously $q$-connected metric space.

Then the map $\Hulf_\ast(X) \to \Huf_\ast(X)$ is an isomorphism for $\ast \le q$.
\end{prop}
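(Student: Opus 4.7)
The plan is to exhibit a chain-level homotopy equivalence between \(\Culf_\ast(X)\) and \(\Cuf_\ast(X)\) in degrees up to \(q\). One direction is the canonical vertex map \(\psi\); I will produce an inverse \(\phi\colon \Cuf_n(X) \to \Culf_n(X)\) by the classical device of \emph{straightening}, namely by choosing, in a coherent and equicontinuous way, a genuine singular simplex \(\sigma_{\bar x}\colon \Delta^n \to X\) with vertex sequence \(\bar x\) for every tuple \(\bar x \in X^{n+1}\) of bounded diameter.

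First I would carry out the straightening by induction on \(n\), working separately for each control parameter \(R > 0\). For \(n = 0\) take the constant map at the point. Inductively, assume \(\sigma_{\bar y}^R\) has been chosen for every \(R\)-bounded \((k+1)\)-tuple with \(k \le n\), with the \(i\)th face of \(\sigma_{\bar y}^R\) equal to \(\sigma_{\partial_i \bar y}^R\) and with the full family equicontinuous. For an \(R\)-bounded \((n+2)\)-tuple \(\bar x\) the previously chosen face simplices assemble into a map \(\partial \Delta^{n+1} \cong S^n \to X\); the inductive equicontinuity guarantees that the resulting family of spherical maps, indexed by all \(R\)-bounded \(\bar x\), is equicontinuous, so that, provided \(n \le q\), equicontinuous \(q\)-connectedness supplies an equicontinuous family of extensions \(\sigma_{\bar x}^R\colon \Delta^{n+1} \to X\). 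Setting \(\phi(\bar x) := \sigma_{\bar x}^R\) on an \(R\)-controlled uniformly finite chain and extending linearly yields a chain map into \(\Culf_\ast(X)\) in degrees \(\le q+1\): the supremum of the coefficients is preserved, the equicontinuous diameter bound on the \(\sigma_{\bar x}^R\) together with the uniform local finiteness of the tuple chain gives the local finiteness condition, and equicontinuity of the image family is automatic by construction.

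It then remains to verify \(\psi \circ \phi = \id\) (immediate from the vertex condition) and to construct a chain homotopy \(K\) with \(\partial K + K \partial = \id - \phi \circ \psi\) through degree \(q\). I would build \(K\) from the standard prism decomposition of \(\Delta^n \times [0,1]\) into \((n+1)\)-simplices, again filling inductively and equicontinuously, now using equicontinuous \(q\)-connectedness up to dimension \(q+1\). I expect the main technical obstacle throughout to be \emph{uniformity}: ordinary \(q\)-connectedness would suffice to fill each individual boundary sphere, but we need all the fillings that arise from a single uniformly finite (respectively, uniformly locally finite) chain to form an equicontinuous family, so that the image really lies in the uniformly locally finite complex. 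This is precisely what the hypothesis of equicontinuous \(q\)-connectedness is designed to supply, and it is the key input at every inductive step; independence of the choice of control parameter \(R\) up to chain homotopy is handled by one further inductive filling, which is still available in the degree range where we are working.
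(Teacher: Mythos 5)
Your approach is essentially the same as the paper's. The paper also constructs an inverse by inductively filling tuples to singular simplices (its operator $\Delta$ is your $\phi$, with $\partial \Delta(\bar x) = \Delta(\partial \bar x)$ as the coherence requirement), uses equicontinuous $q$-connectedness to control the modulus of continuity of the fillings so that uniformly finite chains land in $\Culf_\ast$, observes $\iota \circ \Delta = \id$ on chains, and then asserts that $\Delta \circ \iota$ is chain homotopic to the identity via a ulf homotopy. You flesh out two points that the paper leaves implicit --- the organization by control parameter $R$ (the paper does a single global filling with control coming from the tuple diameter, while you fix $R$ and argue independence afterwards; both are fine) and the prism-decomposition construction of the chain homotopy --- but these are elaborations, not a different route. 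One small caveat: when you say the homotopy uses ``equicontinuous $q$-connectedness up to dimension $q+1$,'' what you actually need is to fill the boundary spheres $\partial(\Delta^n \times [0,1]) \cong S^n$ for $n \le q$, which is exactly equicontinuous $q$-connectedness; no $(q+1)$-connectedness is required.
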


\begin{proof}
For any point $\bar x \in X^{i+1}$ with $i \le q+1$ we can construct inductively a simplex $\Delta(\bar x)\colon \Delta^i \to X$ with vertices $\bar x$ by exploiting the contractibility of~$X$: we first connect any two points of~$\bar x$ by a path, then we fill any $S^1$ that we get in this way by a $2$-disk, then we fill any $S^2$ that we get by a $3$-disk, and so on. Doing this inductively ensures that $\partial \Delta(\bar x) = \Delta (\partial \bar x)$, i.e., we have compatibility with the boundary operator.

Since $X$ is equicontinuously $q$-connected we can control the diameter of $\Delta(\bar x)$ and its modulus of continuity by the diameter of $\bar x$. So applying this procedure to a uniformly finite chain we get a uniformly locally finite chain. This map $\Delta$ will be a chain map due to the compatibility of the contruction with the boundary operator, and therefore we get a map $\Delta_\ast \colon \Huf_\ast(X) \to \Hulf_\ast(X)$ for all $\ast \le q$.

By contruction $\Delta_\ast$ is a right inverse for the natural map $\iota_\ast \colon \Hulf_\ast(X) \to \Huf_\ast(X)$ since on the level of chain groups we already have $\iota \circ \Delta = \id$. The composition $\Delta \circ \iota$ is not the identity map, but it is chain homotopic to it via a uniformly locally finite homotopy (i.e., a homotopy admissible for uniformly locally finite homology). So $\Delta_\ast$ is also the left inverse to $\iota_\ast$.
\end{proof}

\begin{defn}[ulf-submanifolds]
\label{defn:roughly_geodesic}
Let $(M,g)$ be a connected Riemannian manifold and let $N \subset M$ be a connected submanifold endowed with a Riemannian metric $h$ (not necessarily the one induced from $M$).

Then $N$ is a ulf-submanifold if the identity map $\id\colon (N, d_h) \to (N, d_g|_{N\times N})$ and its inverse $\id\colon (N, d_g|_{N\times N}) \to (N, d_h)$ are uniformly continuous and rough (Definition~\ref{defnjsd1f32}).

Here $d_h\colon N \times N \to \IR$ is the metric on $N$ derived from its Riemannian metric $h$ and $d_g|_{N\times N}$ is the subspace metric on $N$ induced from the metric space $(M, d_g)$.
\end{defn}

\begin{examples}
Any connected, totally geodesic submanifold is ulf.

Let $N \subset M$ be a submanifold, where $M$ is a closed Riemannian manifold. Let $X$ be the universal cover of $M$ equipped with the pull-back metric. Choose a connected component $\bar N \subset X$ of the preimage of $N$ under the covering projection $X \to M$ and equip $\bar N$ with the induced Riemannian metric. Then~$\bar N$ will be ulf, but usually not totally geodesic. Note that $\bar N$ will be also an example for Definition \ref{defn:thick}, i.e., it will have a uniformly thick normal bundle.
\end{examples}

\begin{rem}\label{remj23}
If $N \subset M$ is a ulf-submanifold, then $\Hulf_\ast(N)$ will not depend on the choice of metric $d_h$ or $d_g|_{N\times N}$ on $N$. The same holds for $\Huf_\ast(N)$.
\end{rem}

\begin{defn}[Uniformly thick normal bundles]\label{defn:thick}
Let $M$ be a Riemannian manifold and $N \subset M$ a submanifold. Its normal bundle $\nu$ is uniformly thick if there is an $\varepsilon > 0$ such that the exponential map of $M$ is an embedding of $\{V \in \nu: \|V\| < \varepsilon\}$ into $M$.
\end{defn} 

\begin{nonexample}
There is no isometric embedding of hyperbolic space $\mathbb{H}^n$ into $\IR^N$ with a uniformly thick normal bundle. The reason is that if we had such an embedding, we could bound from above the volume growth of hyperbolic space by the volume growth of Euclidean space. I learned this on MathOverflow from Anton Petrunin. Note that without the requirement of having a uniformly thick normal bundle there is an isometric embedding $\mathbb{H}^n \to \IR^N$ by the Nash embedding theorem.
\end{nonexample}

\begin{lem}
Let $N$ be a ulf-submanifold of $M$ of codimension $q \ge 1$ with a uniformly thick, oriented\footnote{With respect to the chosen coefficient group $A$.} normal bundle $\nu$. Equip the disc bundle $D\nu$ with the induced metric.

Then the Thom map $\Hulf_\ast(D\nu, S\nu) \to \Hulf_{\ast-q}(N)$ is well-defined.
\end{lem}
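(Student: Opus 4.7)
The plan is to realize the Thom map at the chain level as cap product with a suitably chosen, uniformly bounded cocycle representative $\tau$ of the Thom class of $\nu$, and then to verify that this cap product preserves the three defining conditions of a uniformly locally finite chain. Independence of the construction from auxiliary choices will give a well-defined homology homomorphism.

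First I would construct a singular cocycle $\tau \in C^q(D\nu, S\nu; A)$ representing the Thom class of $\nu$ with the following uniformity: there is a constant $C$ (depending only on the modulus of continuity of the simplices appearing) such that $|\tau(\sigma)| \le C$ for every singular $q$-simplex $\sigma\colon \Delta^q \to D\nu$ appearing as a front face of a simplex from any fixed equicontinuous family. To build $\tau$, I would exploit uniform thickness: the exponential map gives a uniform scale $\varepsilon > 0$ at which $D\nu$ looks like a product bundle $U \times D^q$ over balls $U \subset N$ of diameter $\le \varepsilon/2$. Choose a uniform, uniformly locally finite cover of $N$ by such trivializing charts. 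On each chart take the standard (fiberwise compactly supported) Thom cocycle provided by the chosen orientation of $\nu$, and glue using a partition of unity subordinate to the cover; the orientation provides consistent transition data. The resulting $\tau$ is bounded with controlled support and vanishes on simplices contained in $S\nu$.

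Next I would define the Thom map on chains by the cap-product formula
\begin{equation*}
T(\sigma) \ := \ \tau\bigl(\sigma|_{[v_0, \ldots, v_q]}\bigr) \cdot \bigl(p \circ \sigma|_{[v_q, \ldots, v_n]}\bigr),
\end{equation*}
extended $A$-linearly, where $p\colon D\nu \to N$ is the bundle projection. That $T$ is a chain map and factors through the relative complex follows from the cocycle identity $\delta \tau = 0$ and the relativity of $\tau$ by the standard computation for cap product. The three conditions for $T(c)$ to lie in $C^{\mathrm{ulf}}_\ast(N)$ would then be checked in turn: boundedness of coefficients is immediate from $|a_\alpha \tau(\sigma_\alpha|_{\text{front}})| \le C \sup_\alpha |a_\alpha|$; local finiteness in $N$ follows from uniform thickness, which forces $p$ to be uniformly Lipschitz on the tube so that $r$-balls in $N$ have preimage of uniformly bounded size; equicontinuity of $\{p\circ\sigma_\alpha|_{[v_q,\ldots,v_n]}\}$ follows from equicontinuity of $\{\sigma_\alpha\}$ and the Lipschitz property of $p$.

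I expect the main obstacle to be the construction of the bounded representative $\tau$ in the first step. In general a Thom class admits a bounded cocycle representative only under some uniform control of the bundle; here uniform thickness is exactly the hypothesis that provides the uniform trivializing charts and a uniform scale on which the local orientation data of $\nu$ can be encoded as a bounded cocycle. Once this is in place the remaining verifications are essentially routine transcriptions of the classical cap-product calculation, and well-definedness on homology follows because two such bounded representatives differ by a bounded coboundary, yielding a chain homotopy compatible with the ulf conditions.
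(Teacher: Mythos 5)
Your proposal follows essentially the same strategy as the paper: realize the Thom map at chain level as cap product with a Thom cocycle, and use uniform thickness of the normal bundle to control coefficients, local finiteness, and moduli of continuity. Your construction of a uniformly bounded cocycle representative $\tau$ via a uniformly locally finite cover of $N$ by trivializing charts at scale $\varepsilon$ is a reasonable elaboration of a step the paper leaves implicit, and the verification of the three ulf conditions via the Lipschitz property of the bundle projection is in the right spirit. The paper phrases the boundedness of coefficients slightly differently, in terms of a uniform upper bound (coming from uniform thickness plus equicontinuity) on the number of times any simplex of the chain can cross the whole disc bundle; this is the geometric content underlying your ``$|\tau(\sigma_\alpha|_{\mathrm{front}})|\le C$'', so the two formulations match.

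There is, however, one genuine gap: you never use the hypothesis that $N$ is a ulf-submanifold, yet it is needed. Your construction naturally produces a chain on $N$ that is uniformly locally finite with respect to the metric on $N$ induced from $(M,d_g)$, since that is the metric your Lipschitz estimates for the projection $p\colon D\nu \to N$ are phrased in. But $\Hulf_{\ast-q}(N)$ is taken with respect to the intrinsic metric $d_h$ on $N$, which in general is \emph{not} the subspace metric. To conclude that your chain is ulf in $(N,d_h)$, and hence that the Thom map really lands in $\Hulf_{\ast-q}(N)$, you must invoke the ulf-submanifold hypothesis, which says exactly that $\id\colon(N,d_g|_{N\times N})\to(N,d_h)$ and its inverse are uniformly continuous and rough, so that passing between the two metrics induces an isomorphism on ulf-homology (cf.\ Remark~\ref{remj23}). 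The paper's proof addresses this explicitly as its second paragraph; your proposal omits it, leaving the target of the Thom map unidentified with the group stated in the lemma.
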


\begin{proof}
One has to check that the cap-product of a ulf-chain with the Thom class is again a ulf-chain. This follows from the uniform thickness of the normal bundle: for example, given a ulf-chain $c$, there is a uniform upper bound on the number how often any simplex of $c$ can cross the whole disc bundle. The latter implies that the cap-product of $c$ with the Thom class has again uniformly bounded coefficients.

The Thom map maps into $\Hulf_{\ast-q}(N)$, where to define this we use the metric on $N$ coming from the surrounding metric of $M$ (since we use for the disc bundle $D\nu$ the from $M$ induced metric). We need that $N$ is a ulf-submanifold to be sure that changing the metric to the given one of $N$ induces an isomorphism on ulf-homology, cf.~Remark~\ref{remj23}.
\end{proof}

Let us state now the last definition we need before we are able to state and prove the main theorem in its rough version.

\begin{defn}\label{defnjknfwe2323}
Let $X \subset Y$ be a subspace of the metric space $Y$.

We say that the induced map $\pi_q(X) \to \pi_q(Y)$ is equicontinuously surjective, if for all $R > 0$ we have the following: given an equicontinuous collection of maps $\{S^q \to B_R(X)\}$, then there exist homotopies pushing these spheres into $X$ and such that these homotopies form an equicontinuous family, too.

Here $B_R(X) \subset Y$ denotes the $R$-neighbourhood of $X$ inside $Y$.
\end{defn}

\begin{thm}\label{thm:constr_loc_map}
Let $M$ be a Riemannian manifold, and let $N \subset M$ be a ulf-submanifold of codimension $q \ge 1$ with a uniformly thick normal bundle $\nu$.

Assume further one of the following:
\begin{enumerate}
\item That $M$ is equicontinuously $q$-connected and $\nu$ oriented\footnote{for the chosen coefficients $A$}.
\item That $M$ is equicontinuously $(q-1)$-connected, $\pi_q(N) \to \pi_q(M)$ is equicontinuously surjective and $\nu$ is trivial.
\end{enumerate}

Then there is a map $\Huf_\ast(M) \to \Huf_{\ast-q}(N)$ such that the following diagram commutes:
\[\xymatrix{
\Hulf_\ast(M) \ar[r] \ar[d] & \Hulf_\ast(D\nu, S\nu) \ar[r] & \Hulf_{\ast-q}(N) \ar[d]\\
\Huf_\ast(M) \ar[rr] & & \Huf_{\ast-q}(N)}\]
where the map $\Hulf_\ast(M) \to \Hulf_\ast(D\nu, S\nu)$ is the Thom--Pontryagin collapse\footnote{In order to define this collapse properly we need a corresponding excision result for ulf-homology. We have it here since we assume the normal bundle to be uniformly thick.}, and the map $\Hulf_\ast(D\nu, S\nu) \to \Hulf_{\ast-q}(N)$ is cap-product with the Thom class of $\nu$.
\end{thm}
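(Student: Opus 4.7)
The plan is to define the wrong-way map at the chain level by imitating the classical composition of the Thom--Pontryagin collapse, cap product with the Thom class, and retraction onto $N$. Concretely, for each tuple $\bar x = (x_0, \dots, x_\ast) \in M^{\ast+1}$ occurring in a uf chain, I set
\[ \Phi(\bar x) \;:=\; u\bigl(\Delta(x_0, \dots, x_q)\bigr) \cdot \bigl(p(x_q), p(x_{q+1}), \dots, p(x_\ast)\bigr), \]
where $\Delta$ realizes the front $q$-face as an actual singular $q$-simplex in $M$, $u$ is a bounded cochain representative of the Thom class of $\nu$ extended by zero off $D\nu$, and $p$ is a rough retraction onto $N$ defined on a sufficiently large neighborhood of $N$.

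The three ingredients are all at hand. The realization $\Delta$ comes from the same inductive filling procedure used in the proof of Proposition~\ref{prop:contr_iso}, carried out only up to dimension~$q$: equicontinuous $(q-1)$-connectedness of $M$ permits filling $(q-1)$-spheres and so produces $q$-simplex realizations of uniformly bounded diameter and equicontinuous modulus of continuity. The rough retraction $p$ is any nearest-point projection, well-defined on a fixed neighborhood of $N$ by the uniform thickness of $\nu$. The uniform diameter bound on the tuples of a uf chain forces all vertices $x_q, \dots, x_\ast$ into the domain of $p$ whenever the coefficient $u(\Delta(x_0, \dots, x_q))$ is nonzero, so $\Phi(\bar x)$ is a well-defined uf chain on $N$.

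The heart of the argument is the chain-map property. Expanding $\partial\Phi(\bar x) - \Phi(\partial\bar x)$ via the standard cap-product algebra, all terms cancel except for a pairing of $u$ with the $q$-cycle $\Delta(\partial(x_0, \dots, x_{q+1}))$ in $M$. This is precisely where the equicontinuous surjectivity of $\pi_q(N) \to \pi_q(M)$ enters: it yields an equicontinuous family of $(q+1)$-chains in $M$ pushing this $q$-cycle into $N$, and I would combine this with the cocycle property of $u$ and a choice of Mathai--Quillen-style cocycle representative vanishing on chains lying in $N$ to conclude that the obstruction is trivial up to a uniformly finite chain homotopy. Commutativity of the diagram is then immediate, since for a ulf chain $c = \sum a_\sigma \sigma$ the realization $\Delta(x_0, \dots, x_q)$ may be taken to be the front $q$-face of $\sigma$, in which case $\Phi$ literally computes the claimed composition along the top row.

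The main obstacle is this chain-map step. The formal cap-product identity itself is classical, but the uniform bookkeeping that keeps the diameters of the realizations $\Delta(\bar y)$, the nullhomotopies $\tau_{\bar x}$ into $N$, and the supports of the Thom cocycle controlled by a single constant (depending only on the uf parameters of the input chain) is delicate: without it, the correction terms could fail the boundedness condition that defines uf homology, and the construction would not descend to $\Huf_\ast(M) \to \Huf_{\ast-q}(N)$.
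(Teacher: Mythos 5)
Your proposal follows the paper's proof essentially verbatim: the same chain-level formula $c \mapsto \eta_*(\theta \cap c)$ with a front-face realization $\Delta$ supplied by equicontinuous $(q-1)$-connectedness and a nearest-point projection $\eta$ onto $N$, the same identification of the obstruction to the chain-map property as the evaluation of the Thom cocycle on the $q$-cycle $\sum_{j=0}^{q+1}(-1)^j\Delta(x_0,\dots,\widehat{x_j},\dots,x_{q+1})$, and the same appeal to equicontinuous surjectivity of $\pi_q(N)\to\pi_q(M)$ together with the Thom cocycle's vanishing on chains supported in $N$ to dispose of it. The paper additionally writes out the boundary comparison term by term and checks independence of the choices of $\Delta$ and $\eta$, but your sketch captures the construction and the key ideas.
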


\begin{proof}
For $k \ge q$ let $c \in C_k^\uf(M)$ with $c = \sum_{\bar x \in M^{k+1}} a_{\bar x} {\bar x}$.

For $\bar x = (x_0, \ldots, x_k)$ we form the $q$-simplex $\Delta(x_0, \ldots, x_q)$, where $\Delta$ is as in the proof of Proposition \ref{prop:contr_iso}. Here we need the assumption that $M$ is $(q-1)$-connected; otherwise we would not be able to construct this $q$-simplex. If $\theta$ denotes the Thom class of $\nu$, we may then integrate $\Delta(x_0, \ldots, x_q)$ against it to get $\theta(\Delta(x_0, \ldots, x_q)) \in A$. So for given $\bar x = (x_0, \ldots, x_k)$ we have constructed
\[\theta \cap \bar x := \theta(\Delta(x_0, \ldots, x_q)) \cdot (x_q, \ldots, x_k).\]

We set $\theta \cap c := \sum_{\bar x \in M^{k+1}} a_{\bar x} (\theta \cap \bar x)$. Because $c$ is a uniformly finite $k$-chain, $\theta$ a uniform class, and $M$ equicontinuously $(q-1)$-connected, we conclude that $\theta \cap c$ is a uniformly finite $(k-q)$-chain which is supported in an $R$-neighbourhood of $N$.

Let $\eta\colon M \to N$ map $x \in M$ to a point $y \in N$ which minimizes the distance from $x$ to $N$, i.e., $d_M(x,N) = d_M(x,y)$. Such a point $y$ might not be unique; in this case we just choose one. Now we set $\eta_\ast(x_q, \ldots, x_k) := (\eta(x_q), \ldots, \eta(x_k)) \in N^{k-q+1}$ and define the map $C_\ast^\uf(M) \to C_{\ast-q}^\uf(N)$ by $c \mapsto \eta_\ast(\theta \cap c)$. Note that since $\theta \cap c$ is a uniformly finite chain of $M$ supported in an $R$-neighbourhood of $N$, we can conclude that $\eta_\ast(\theta\cap c)$ is a uniformly finite chain on $N$, where we use on $N$ the from $M$ induced metric. Since $N$ is a ulf-submanifold, $\eta_\ast(\theta\cap c)$ will stay a uniformly finite chain if we change the metric on $N$ to its original metric, cf.~Remark~\ref{remj23}.

Let us show that the above constructed map descends to homology classes. In the case where we assume that $M$ is equicontinuously $q$-connected, we can carry out the construction also for $(q+1)$-simplices and hence our map $C_\ast^\uf(M) \to C_{\ast-q}^\uf(N)$ will be a chain map (because the construction of $\Delta$ is inductively over the skeleton of a simplex, so that it becomes compatible with the boundary operator).

The case where we assume that $M$ is equicontinuously $(q-1)$-connected, the map $\pi_q(N) \to \pi_q(M)$ is equicontinuously surjective and $\nu$ is trivial, is a bit more involved. Here we will directly show $\partial (\eta_\ast (\theta \cap c)) = (-1)^q \cdot \eta_\ast (\theta \cap \partial c)$. The formulas we get are
\begin{align*}
\partial (\eta_\ast (\theta \cap c)) & = \sum_{\bar x \in M^{k+1}} a_{\bar x} \cdot \theta(\Delta(x_0, \ldots, x_q)) \cdot \sum_{j = q}^k (-1)^{j-q} \cdot (\eta(x_q), \ldots, \widehat{\eta(x_j)}, \ldots, \eta(x_k)),\\
\eta_\ast (\theta \cap \partial c) & = \sum_{\bar x \in M^{k+1}} a_{\bar x} \cdot \sum_{j = 0}^k (-1)^j \cdot \eta_\ast (\theta \cap (x_0, \ldots, \widehat{x_j}, \ldots, x_k)),
\end{align*}
where the second one can be further expanded by using
\begin{align*}
\sum_{j = 0}^k & (-1)^j \cdot \eta_\ast (\theta \cap (x_0, \ldots, \widehat{x_j}, \ldots, x_k))\\
& = \sum_{j = 0}^q \, (-1)^j \theta(\Delta(x_0, \ldots, \widehat{x_j}, \ldots, x_{q+1})) \cdot (\eta(x_{q+1}), \ldots, \eta(x_k)) \ \! +\\
& \quad \! \sum_{j=q+1}^k \! (-1)^j \theta(\Delta(x_0, \ldots, x_q)) \cdot (\eta(x_q), \ldots, \widehat{\eta(x_j)}, \ldots, \eta(x_k).
\end{align*}
So in order to have $\partial (\eta_\ast (\theta \cap c)) = (-1)^q \cdot \eta_\ast (\theta \cap \partial c)$ we conclude that we need
\[\sum_{\bar x \in M^{k+1}} a_{\bar x} \cdot \sum_{j = 0}^{q+1} (-1)^j \theta(\Delta(x_0, \ldots, \widehat{x_j}, \ldots, x_{q+1})) \cdot (\eta(x_{q+1}), \ldots, \eta(x_k)) \stackrel{!}= 0.\]
Note that $\sum_{j = 0}^{q+1} (-1)^j \Delta(x_0, \ldots, \widehat{x_j}, \ldots, x_{q+1})$ is a cycle (it would be the boundary of the simplex $\Delta(x_0, \ldots, x_{q+1})$ if $M$ would be $q$-connected). We regard it as a $q$-sphere, and so
\begin{equation}\label{eqkjsfd23}
\sum_{\bar x \in M^{k+1}} a_{\bar x} \cdot \sum_{j = 0}^{q+1} (-1)^j \Delta(x_0, \ldots, \widehat{x_j}, \ldots, x_{q+1})
\end{equation}
can be seen as an equicontinuous family of $q$-spheres supported in an $R$-neighbourhood of $N$ (forget for a moment the coefficients $a_{\bar x}$). Because we assume $\pi_q(N) \to \pi_q(M)$ to be equicontinuously surjective, we can conclude that~\eqref{eqkjsfd23} is homologous to a degree~$q$ ulf-cycle supported in $N$. Hence the application of $\theta$ to it vanishes, since we assume the normal bundle to be trivial.

The above arguments give us the desired map $H_\ast^\uf(M) \to H_{\ast-q}^\uf(N)$. Commutativity of the main diagram is clear from its construction. Let us argue that it is independent of any choices made: to show that different filling maps $\Delta\colon \Cuf_q(M) \to \Culf_q(M)$ result in the same map $H_\ast^\uf(M) \to H_{\ast-q}^\uf(N)$ we have to use the same argument that we used to show that the map is well-defined on homology classes. Different maps $\eta|_{B_R(N)} \colon B_R(N) \to N$ for fixed $R > 0$ are close to each other and therefore the resulting uniformly finite homology class is independent of the choice of $\eta$. Note that the restriction of $\eta$ to the $R$-neighbourhood of $N$ is necessary for the statement to be true, but this restriction is no problem for the proof since $\theta \cap c$ is supported in an $R$-neighbourhood of $N$.
\end{proof}

\begin{rem}\label{rem:more_maps}
If for some $q^\prime \ge q$ one of the assumptions in Theorem~\ref{thm:constr_loc_map} is satisfied for $q^\prime$ instead of $q$, we may use any uniform\footnote{This just means that the cap-product is well-defined, i.e., maps ulf-classes to ulf-classes.} cohomology class of degree $q^\prime$ of the pair $(D\nu, S\nu)$ for the cap-product map $\Hulf_\ast(D\nu, S\nu) \to \Hulf_{\ast-q^\prime}(N)$ to get a corresponding map $\Huf_\ast(M) \to \Huf_{\ast-q^\prime}(N)$.
\end{rem}

\subsection{Continuity of the wrong way map}
\label{sec:3}

In this section we will discuss continuity of the rough wrong way map, where we will equip the uniformly finite homology groups with certain semi-norms. We will apply this continuity result in Section~\ref{seckjnsd}.

Let $c > 0$ and let $\Gamma_N \subset N$ be a maximal $c$-separated subset of $N$, i.e., $d(\gamma, \gamma^\prime) > c$ for any $\gamma \not= \gamma^\prime \in \Gamma_N$. The inclusion $\Gamma_N \to N$ induces an isomorphism $\Huf_\ast(\Gamma_N) \cong \Huf_\ast(N)$ where the inverse map is induced by the map $N \to \Gamma_N$ given by mapping a point of $N$ to the nearest point of $\Gamma_N$ (in case this nearest point is not unique just pick one arbitrarily). This is \cite[Corollary 2.2]{block_weinberger_1}.

Choosing discretizing subsets $\Gamma_M \subset M$ and $\Gamma_N \subset N$, we can construct under the assumptions of Theorem \ref{thm:constr_loc_map} a map $\Huf_\ast(\Gamma_M) \to \Huf_{\ast-q}(\Gamma_N)$ such that we get the diagram
\[\xymatrix{
\Huf_\ast(M) \ar[rr] \ar[d] & & \Huf_{\ast-q}(N) \ar[d]\\
\Huf_\ast(\Gamma_M) \ar[rr] & & \Huf_{\ast-q}(\Gamma_N)}\]
The construction of the map $\Huf_\ast(\Gamma_M) \to \Huf_{\ast-q}(\Gamma_N)$ is analogous to the construction of $\Huf_\ast(M) \to \Huf_{\ast-q}(N)$ with the only change that at the end we project by $\eta$ to $\Gamma_N$.

The reason for doing the discretization is that now we may use the ideas from \cite{engel_rough}.

\begin{defn}[{\cite{engel_rough}}]
\label{defn_pol_chains}
For every $n \in \IN_0$ we define the following norm of a uniformly finite chain $c = \sum a_{\bar y} {\bar y} \in \Cuf_k(Y)$ of a uniformly discrete space $Y$:
\[\|c\|_{\infty,n} := \sup_{\bar y \in Y^{k+1}} |a_{\bar y}| \cdot \length(\bar y)^n,\]
where $\length(\bar y) = \max_{0 \le i,j \le k} d(y_i,y_j)$ for $\bar y = (y_0, \ldots, y_k)$.

We equip $\Cuf_k(Y)$ with the family of norms $(\|\largecdot\|_{\infty,n} + \|\partial \largecdot\|_{\infty,n})_{n \in \IN_0}$, denote its completion to a \Frechet space by $\Cufpol_k(Y)$ and the resulting homology by $\Hufpol_\ast(Y)$.
\end{defn}

We want to find now conditions under which the wrong way map will be continuous and therefore give rise to a map $\Hufpol_\ast(\Gamma_M) \to \Hufpol_{\ast-q}(\Gamma_N)$.

\begin{defns}
We will introduce now polynomial dependences into the definitions from the last section.
\begin{enumerate}
\item A metric space $X$ is said to be polynomially $q$-connected if for every $i \le q$ we have the following: there exists a polynomial $P$ such that if $S^i \to X$ is an $L$-Lipschitz map, then there exists a contraction of it which is $P(L)$-Lipschitz.
\item $(N,h) \subset (M,g)$ is a polynomial ulf-submanifold if $\id\colon (N, d_h) \to (N, d_g|_{N\times N})$ and its inverse $\id\colon (N, d_g|_{N\times N}) \to (N, d_h)$ are uniformly continuous and polynomially rough. The latter means that in the definition of a rough map the $S$, viewed as a function of $R$, is bounded from above by a polynomial in $R$.
\item $\Gamma_M$ has polynomial growth if there exists a polynomial $P$ such that $\card\{B_r(\gamma)\} \le P(r)$ for all $\gamma \in \Gamma_M$ and all $r > 0$.
\end{enumerate}
These polynomial dependences will be needed in the proof of Theorem~\ref{thm:continuity} below.
\end{defns}

\begin{example}\label{exjnkwe23}
Let $M$ be a closed, connected, Riemannian manifold with $\pi_i(M) = 0$ for $2 \le i \le q-1$ and with $\pi_1(M)$ virtually nilpotent. Then the universal cover of $M$ will be polynomially $(q-1)$-connected and of polynomial volume growth if equipped with the pull-back metric and if we choose $\Gamma_M$ to be the translate of a single point by all deck transformations.

That the universal cover will have polynomial volume growth follows from the fact that virtually nilpotent groups have polynomial growth. That it will be polynomially $(q-1)$-connected follows from Riley's result \cite[Theorems D \& E]{riley} that virtually nilpotent groups have polynomially bounded higher-order combinatorial functions.

Let furthermore $N \subset M$ be a submanifold. Then any connected component $\bar N \subset X$ of the lift of $N$ to the universal cover $X$ of $M$ by the projection map $X \to M$ will be a polynomial ulf-submanifold if equipped with the induced Riemannian metric.
\end{example}

\begin{thm}\label{thm:continuity}
Let $M$ be a Riemannian manifold, $N \subset M$ a polynomial ulf-submanifold of codimension $q \ge 1$ with a uniformly thick normal bundle $\nu$, and assume that $\Gamma_M$ has polynomial growth.

Assume further one of the following:
\begin{enumerate}
\item That $M$ is polynomially $q$-connected and $\nu$ oriented\footnote{for the chosen coefficients $A$}.
\item That $M$ is polynomially $(q-1)$-connected, $\pi_q(N) \to \pi_q(M)$ polynomially surjective and $\nu$ is trivial.
\end{enumerate}

Then the wrong way map becomes continuous with respect to the topology given in Definition \ref{defn_pol_chains} and therefore gives rise to a map $\Hufpol_\ast(\Gamma_M) \to \Hufpol_{\ast-q}(\Gamma_N)$ such that we have the following commutative diagram:
\[\xymatrix{
\Hulf_\ast(M) \ar[r] \ar[d] & \Hulf_\ast(D\nu, S\nu) \ar[r] & \Hulf_{\ast-q}(N) \ar[d]\\
\Huf_\ast(M) \ar[rr] \ar[d] & & \Huf_{\ast-q}(N) \ar[d]\\
\Hufpol_\ast(\Gamma_M) \ar[rr] & & \Hufpol_{\ast-q}(\Gamma_N)}\]
\end{thm}

\begin{proof}
Straightforward; the assumptions are tailored such that this result holds.
\end{proof}

\subsection{Large scale higher codimensional index obstructions}
\label{seckjnsd}

The author \cite{engel_rough} constructed a map $\chi\colon \Kalg_\ast(\IC_u^\ast (M)) \to \Huf_\ast(M)$ for a manifold $M$ of bounded geometry, where $\IC_u^\ast (M)$ is the algebraic uniform Roe algebra of $M$. It has the property that if $M$ is spin and if we denote by $\ind (\slashed D_M) \in \Kalg_\ast(\IC_u^\ast (M))$ the large scale index class of the Dirac operator of $M$, then $\chi(\ind (\slashed D_M))$ is given by the coarsification of the \Poincare dual of the total $\hat A$-class of $M$.

\[\xymatrix{
& & K_\ast^\alg(\IC_u^\ast(M)) \ar[r] \ar[d]^{\chi} & K_\ast(C_u^\ast (M)) \ar[d]\\
\HbdR^{m - \ast}(M) \ar[r]^{\PD} & \Hulf_\ast(M) \ar[r]^c & \Huf_\ast(M) \ar[r] & \Hufpol_\ast(\Gamma_M)
}\]

To be more concrete, the class $\chi(\ind (\slashed D_M))$ coincides with the image of $\PD({\hat A}_M)$ under the coarsification map $c \colon \Hulf_\ast(M) \to \Huf_\ast(M)$. Here we denote by $\hat A_M \in \bigoplus_{k \ge 0} \HbdR^{4k}(M)$ the total $\hat A$-class of the spin manifold $M$ and by $\PD\colon \HbdR^{m - \ast}(M) \to \Hulf_{\ast}(M)$ the \Poincare duality map, where $\HbdR^{m-\ast}(M)$ is bounded de Rham cohomology. Consequently, we get the following corollary from Theorem~\ref{thm:constr_loc_map}:

\begin{cor}\label{corjnsd239887}
Assume that we are in the setting of Theorem~\ref{thm:constr_loc_map} and additionally that $M$ and $N$ are both spin, of bounded geometry and that the normal bundle $\nu$ is trivial.

Then the wrong way map $\Huf_\ast(M) \to \Huf_{\ast-q}(N)$ maps the class $\chi(\ind (\slashed D_M)) \in \Huf_\ast(M)$ to $\chi(\ind (\slashed D_N)) \in \Huf_{\ast-q}(N)$.
\end{cor}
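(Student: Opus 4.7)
The plan is to lift the computation from $\Huf$ to $\Hulf$ via the commutative diagram of Theorem~\ref{thm:constr_loc_map}. By the identification recalled immediately before the statement, $\chi(\ind(\slashed D_M)) = c(\PD(\hat A_M))$; in other words, the class of interest is already the image under the coarsification map $c\colon \Hulf_\ast(M)\to \Huf_\ast(M)$ of a ulf-class, namely $\PD(\hat A_M)$. By commutativity of the diagram in Theorem~\ref{thm:constr_loc_map}, the wrong way map sends $c(\PD(\hat A_M))$ to $c$ applied to the ulf-level composition
\[\Hulf_\ast(M)\xrightarrow{\mathrm{TP}}\Hulf_\ast(D\nu,S\nu)\xrightarrow{-\cap\theta}\Hulf_{\ast-q}(N)\]
evaluated on $\PD(\hat A_M)$. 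The analogous identification for $N$ says $\chi(\ind(\slashed D_N)) = c(\PD(\hat A_N))$, so it suffices to show that this ulf-level composition sends $\PD(\hat A_M)$ to $\PD(\hat A_N)$.

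To do so, I would first identify the composition with cohomological restriction under Poincar\'e duality. In the bounded-geometry setting, uniform thickness of $\nu$ yields an excision isomorphism $\Hulf_\ast(D\nu,S\nu)\cong \Hulf_\ast(M, M\setminus N)$, and the Thom isomorphism for the (trivial) normal bundle is implemented at the cochain level by integration along fibres of the disc bundle. Together these give the standard identity, valid at the level of bounded de Rham cohomology and uniformly locally finite homology,
\[(-\cap\theta)\circ \mathrm{TP} \circ \PD \;=\; \PD\circ i^\ast\colon \HbdR^{m-\ast}(M)\longrightarrow \Hulf_{\ast-q}(N),\]
where $i\colon N\hookrightarrow M$ denotes the inclusion and $m=\dim M$.

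Next I would apply multiplicativity of the $\hat A$-class to the decomposition $i^\ast TM = TN \oplus \nu$: since $\nu$ is trivial, $\hat A(\nu)=1$, and hence $i^\ast \hat A_M = \hat A(TN)\cdot \hat A(\nu) = \hat A_N$. Substituting the total $\hat A$-form for $M$ into the displayed identity thus gives $\PD(\hat A_M)\mapsto \PD(\hat A_N)$ at the ulf level, and applying $c$ yields $\chi(\ind(\slashed D_M))\mapsto \chi(\ind(\slashed D_N))$, as required.

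The main obstacle is the middle step: a careful bounded-geometry formulation of Poincar\'e--Lefschetz duality on the tubular neighbourhood of $N$, and the verification that the fibre-integration formula for the cap product with $\theta$ intertwines $\PD$ on $M$ with $\PD$ on $N$ at the ulf level. Once this is in place, the triviality of $\nu$ together with the multiplicativity of $\hat A$ reduces the rest to a one-line computation.
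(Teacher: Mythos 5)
Your proof is correct and spells out the argument the paper treats as immediate: after recalling $\chi(\ind(\slashed D_M)) = c(\PD(\hat A_M))$, the paper simply says the corollary follows from Theorem~\ref{thm:constr_loc_map}, and your reduction to the ulf level via the commutative diagram, combined with the identities $(-\cap\theta)\circ\mathrm{TP}\circ\PD = \PD\circ i^\ast$ and $i^\ast\hat A_M = \hat A_N$ (the latter using triviality of $\nu$ and multiplicativity of $\hat A$), is exactly the intended route.
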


Let us treat now the completed version of the above, i.e., how to deal with the $K$-theory of the uniform Roe algebra $C_u^\ast (M)$ itself, which is the completion of the algebra $\IC_u^\ast (M)$ in operator norm. The reason why we want to deal with this is that the non-vanishing of the large scale index class of the Dirac operator in $K_\ast(C_u^\ast(M))$ carries geometric information (e.g., in this case $M$ does not admit in its quasi-isometry class a metric of uniformly positive scalar curvature), whereas from the non-vanishing in $\Kalg_\ast(\IC_u^\ast (M))$ one usually can not conclude this. For further information on the index theory behind all this the reader should consult Roe's article \cite{roe_coarse_cohomology}.

Note that in the above diagram the arrow $K_\ast(C_u^\ast (M)) \to \Hufpol_\ast(\Gamma_M)$ is the continuous extension of the map $\chi$.\footnote{To be concrete, one defines a \Frechet completion $C_{\mathrm{pol}}^\ast (M)$ of $\IC_u^\ast (M)$ and $\chi$ extends continuously to it. Furthermore, the $K$-theory of $C_{\mathrm{pol}}^\ast (M)$ coincides with the one of $C_u^\ast(M)$.} So if we additionally assume polynomial dependencies as in Theorem \ref{thm:continuity}, we can apply that theorem to the conclusion of the above Corollary~\ref{corjnsd239887} and get the following:

\begin{cor}
Under the assumptions of Theorem \ref{thm:continuity}\footnote{Note that this includes that $M$ has polynomial volume growth.} and Corollary~\ref{corjnsd239887}:

If $c(\PD({\hat A}_N)) \not= 0 \in \bigoplus_{k \ge 0} \Hufpol_{\dim(N)-k}(\Gamma_N)$, then $\ind(\slashed D_M) \not= 0 \in K_\ast(C_u^\ast(M))$.
\end{cor}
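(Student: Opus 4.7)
The plan is to argue by contrapositive: assuming $\ind(\slashed D_M)=0 \in K_\ast(C_u^\ast(M))$, I will deduce that $c(\PD({\hat A}_N))$ vanishes in $\Hufpol_{\ast-q}(\Gamma_N)$. The argument is a diagram chase that glues together Corollary~\ref{corjnsd239887} (which handles the wrong way map at the $\Huf$-level), Theorem~\ref{thm:continuity} (which provides its continuous $\Hufpol$-analogue), and the continuous extension of $\chi$ to $K_\ast(C_u^\ast(M))$ described right before Corollary~\ref{corjnsd239887}.

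Concretely, I would assemble the diagram
\[
\xymatrix{
K_\ast(C_u^\ast(M)) \ar[d]_{\chi} & & K_{\ast-q}(C_u^\ast(N)) \ar[d]^{\chi} \\
\Hufpol_\ast(\Gamma_M) \ar[rr] & & \Hufpol_{\ast-q}(\Gamma_N) \\
\Huf_\ast(M) \ar[u] \ar[rr] & & \Huf_{\ast-q}(N) \ar[u]
}
\]
in which the bottom square commutes by Theorem~\ref{thm:continuity}, and the vertical composites from the top row to the middle row are the continuous extensions of $\chi$. Tracing $\ind(\slashed D_M)$ down the left column produces $\chi(\ind(\slashed D_M)) \in \Hufpol_\ast(\Gamma_M)$. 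By commutativity of the bottom square combined with Corollary~\ref{corjnsd239887}, its image under the middle-row arrow equals the image of $\chi(\ind(\slashed D_N)) \in \Huf_{\ast-q}(N)$ in $\Hufpol_{\ast-q}(\Gamma_N)$, and the latter in turn equals $c(\PD({\hat A}_N))$ by the explicit description of $\chi(\ind(\slashed D_N))$ recalled just before Corollary~\ref{corjnsd239887}. Hence if $\ind(\slashed D_M)$ vanishes, so does $c(\PD({\hat A}_N))$, giving the contrapositive.

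I do not expect any genuine obstacle: essentially all of the analytic and geometric content has already been packaged into Theorem~\ref{thm:continuity} and Corollary~\ref{corjnsd239887}, so the remaining work is just the verification that Corollary~\ref{corjnsd239887}, stated at the $\Huf$-level, is compatible with the vertical maps $\Huf \to \Hufpol$ in the diagram. This compatibility is automatic from the naturality of the coarsening/projection-to-$\Gamma_N$ map and the continuity asserted by Theorem~\ref{thm:continuity}.
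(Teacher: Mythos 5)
Your argument is correct and is essentially the same as the paper's (implicit) proof: the paper gives no explicit proof of this corollary, but the discussion preceding it — the diagram extending $\chi$ to $K_\ast(C_u^\ast(M))\to\Hufpol_\ast(\Gamma_M)$, together with Theorem~\ref{thm:continuity} and Corollary~\ref{corjnsd239887} — is precisely the diagram chase you describe. The only minor abuse of notation is labeling the arrow $K_\ast(C_u^\ast(M))\to\Hufpol_\ast(\Gamma_M)$ simply as $\chi$ and writing $\chi(\ind(\slashed D_M))\in\Hufpol_\ast(\Gamma_M)$, when strictly it is the image of $\chi(\ind(\slashed D_M))\in\Huf_\ast(M)$ under the continuous extension; this is harmless given that the paper itself uses the same shorthand.
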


\section{Equivariant setting}\label{sec:4}

In Section~\ref{seck09723} we will prove an equivariant version of Theorem~\ref{thm:constr_loc_map}. In Section~\ref{subsecn2sd2} we will apply this to universal covers of closed manifolds with the action of their deck transformation group. This will result in wrong way maps in homology of groups, i.e., we get the proof of the topological version of the main theorem as stated in the introduction.

\subsection{Equivariant rough wrong way maps}\label{seck09723}

The natural type of group actions in our setting is the following one:

\begin{defn}[ulf-actions]\label{defn:ulf-action}
An action of a group $\Gamma$ on a metric space $X$ is called ulf if
\begin{itemize}
\item the family of maps $X \to X$ given by $\{x \mapsto \gamma x\}_{\gamma \in \Gamma}$ is uniformly equicontinuous and equirough\footnote{This means that the $S$ in Definition~\ref{defnjsd1f32} of rough maps depends only on $R$ and not on $\gamma$.}, and
\item the action is uniformly proper, i.e., for every $r > 0$ exists a number $K_r > 0$ such that for all $x \in X$ we have $\card \{\gamma \in \Gamma\colon \gamma B_r(x) \cap B_r(x) \not= \emptyset\} < K_r$.
\end{itemize}
Note that the first point is automatically satisfied if $\Gamma$ acts via isometries.
\end{defn}

\begin{example}\label{ex:ulf_action}
Let $(M,g)$ be a closed Riemannian manifold. Then the action of $\pi_1(M)$ on the universal cover $X$ of $M$ is ulf if we equip $X$ with the pull-back metric.
\end{example}

\begin{rem}
The idea behind Definition \ref{defn:ulf-action} is that for any continuous map $\sigma\colon \Delta^n \to X$ the sum $\sum_{\gamma \in \Gamma} \gamma \sigma$ will be a uniformly locally finite chain.
\end{rem}

We denote by $\HulfG_\ast(X)$ the $\Gamma$-equivariant uniformly locally finite homology of $X$ and by $\HufG_\ast(X)$ the $\Gamma$-equivariant uniformly finite homology of $X$. We have again a natural map $\HulfG_\ast(X) \to \HufG_\ast(X)$ by mapping a simplex to its ordered tuple of vertices.

Note that if the action of $\Gamma$ on $X$ is free and cocompact, then the equivariant uniformly locally finite homology $\HulfG_\ast(X)$ coincides with the homology $H_\ast(X / \Gamma)$ of the quotient (we need cocompactness of the action so that we know that only finitely many $\Gamma$-orbits of simplices from a chain in $\HulfG_\ast(X)$ hit a fundamental domain).

\begin{prop}\label{prop:equiv_iso}
Let the action of $\Gamma$ on $X$ be ulf and free, and let $X$ be equicontinuously contractible.

Then the map $\HulfG_\ast(X) \to \HufG_\ast(X)$ is an isomorphism.
\end{prop}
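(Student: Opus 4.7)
The strategy mirrors Proposition~\ref{prop:contr_iso} closely, with every construction performed $\Gamma$-equivariantly. The key preliminary observation is that since $\Gamma$ acts freely on $X$, the diagonal $\Gamma$-action on $X^{n+1}$ is also free (if $\gamma \bar x = \bar x$ then $\gamma x_0 = x_0$, forcing $\gamma = e$), so orbit representatives can be chosen unambiguously and extended by translation. I would therefore build an equivariant filling chain map $\Delta\colon \CufG_\ast(X) \to \CulfG_\ast(X)$ and then invert $\iota$ on both sides.

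First I would inductively choose, for each $n \ge 0$, a set $\mathcal{R}_n \subset X^{n+1}$ of representatives for the $\Gamma$-orbits, and define $\Delta(\bar x) \colon \Delta^n \to X$ on each $\bar x \in \mathcal{R}_n$ so that $\partial \Delta(\bar x) = \Delta(\partial \bar x)$, relying on the equicontinuous contractibility of $X$ to ensure that, on tuples of diameter at most $D$, the resulting family of fillings has diameter and modulus of continuity depending only on $D$. I then extend by $\Delta(\gamma \bar x) := \gamma \cdot \Delta(\bar x)$ for $\bar x \in \mathcal{R}_n$; freeness makes this well-defined, equivariance is built in, and compatibility with faces follows because $\Gamma$-translation commutes with the boundary operator.

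Second I would check that $\Delta$ sends $\CufG$ into $\CulfG$. For a $\Gamma$-equivariant uniformly finite chain, the tuples in its support have diameter bounded by some $R$ (by the third condition in Definition~\ref{defnj23ds}), so the representative fillings have uniformly bounded diameter and equicontinuous modulus. The ulf-action hypothesis, i.e. uniform equicontinuity and equiroughness of $\{x \mapsto \gamma x\}_{\gamma \in \Gamma}$, then propagates these bounds to every $\Gamma$-translate, so the resulting chain is ulf and equivariant. This gives a chain map that is a right inverse to $\iota$ on the nose, since reading off vertices of $\Delta(\bar x)$ returns $\bar x$.

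Finally I would produce a $\Gamma$-equivariant chain homotopy $h\colon \CulfG_n(X) \to \CulfG_{n+1}(X)$ between $\Delta \circ \iota$ and the identity. On representatives of $\Gamma$-orbits of simplices I use the standard simplicial decomposition of the prism $\Delta^n \times [0,1]$ into $(n{+}1)$ simplices of dimension $n{+}1$, filling each prism via equicontinuous contractibility of $X$ so that the $\{0\}$-face is $\sigma$ and the $\{1\}$-face is $\Delta(\iota(\sigma))$; then I extend equivariantly, and the same uniformity argument as above guarantees that $h$ lands in $\CulfG$. The identity $\partial h + h \partial = \Delta \circ \iota - \id$ then gives the left inverse on homology. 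The main obstacle in this program is precisely the uniformity under $\Gamma$-translation: without the equiroughness and uniform equicontinuity built into the notion of a ulf-action, the representative fillings constructed via equicontinuous contractibility would not translate to a uniformly ulf family, and neither $\Delta$ nor $h$ would produce admissible chains.
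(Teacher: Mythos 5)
Your proposal matches the paper's argument: the paper's proof is exactly the observation that the construction from Proposition~\ref{prop:contr_iso} can be carried out $\Gamma$-equivariantly by choosing orbit representatives and extending by translation, with freeness of the (diagonal) action ensuring well-definedness, and the ulf-action hypothesis ensuring that translation preserves the required uniformity. You have simply spelled out in more detail the points the paper leaves implicit (the inductive compatibility of faces, the admissibility check, and the equivariant prism homotopy), so the proof is correct and follows the same route.
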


\begin{proof}
The proof is the same as the one of Proposition \ref{prop:contr_iso} with $\Gamma$-equivariance incorporated: for $\bar x \in X^{i+1}$ we construct a simplex $\Delta(\bar x)\colon \Delta^i \to X$ as before and then we use for the points $\gamma \bar x$ the simplices $\gamma \Delta(\bar x)$. Then we iterate this procedure: we consider a point $\bar x^\prime$ which does not have a corresponding simplex built for it yet, we construct the simplex, and then we translate this simplex by the group action to get corresponding simplices for all the translates of $\bar x^\prime$. We do this until all points in $X^{i+1}$ have a corresponding simplex built for them.

Freeness of the action of $\Gamma$ is needed, because otherwise we will get a problem if we have a point $\bar x \in X^{i+1}$ with $\gamma \bar x = \bar x$ but $\gamma \Delta(\bar x) \not= \Delta(\bar x)$.
\end{proof}

\begin{thm}\label{thm:equiv_loc}
Under the same assumptions as in Theorem \ref{thm:constr_loc_map} and additionally: let the action of $\Gamma$ on $M$, $N$ and the normal bundle $\nu$ be ulf, free and such that the embeddings $N \to \nu$ and $\nu \to M$ are $\Gamma$-equivariant.\footnote{Since $N \subset M$ is assumed to be a ulf-submanifold, it doesn't matter if we use the to $N$ intrinsic metric or the one induced from $M$ in order to say that the action of $\Gamma$ on $N$ is ulf.}

Then there is a map $\HufG_\ast(M) \to \HufG_{\ast-q}(N)$ such that the following diagram commutes:
\[\xymatrix{
\HulfG_\ast(M) \ar[r] \ar[d] & \HulfG_\ast(D\nu, S\nu) \ar[r] & \HulfG_{\ast-q}(N) \ar[d]\\
\HufG_\ast(M) \ar[rr] & & \HufG_{\ast-q}(N)}\]
\end{thm}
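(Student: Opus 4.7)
The plan is to repeat the construction in the proof of Theorem~\ref{thm:constr_loc_map} verbatim, but making every non-canonical choice $\Gamma$-equivariantly, using the orbit-representative technique already introduced in the proof of Proposition~\ref{prop:equiv_iso}. The two pieces of data that were chosen non-canonically in that earlier proof are the filling map $\Delta$ that produces a $q$-simplex in $M$ from any $(q+1)$-tuple of vertices, and the nearest-point projection $\eta\colon M \to N$. The task is to arrange both so that they intertwine with the $\Gamma$-actions on $M$ and $N$.

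For the filling map, I would pick (via the axiom of choice) a set of representatives of the $\Gamma$-orbits on $M^{q+1}$, build $\Delta(\bar x)$ for each representative by the inductive sphere-filling procedure of Proposition~\ref{prop:contr_iso} (which uses equicontinuous $(q-1)$-connectedness of $M$), and then extend by declaring $\Delta(\gamma \bar x) := \gamma\cdot\Delta(\bar x)$. Freeness of the $\Gamma$-action is essential here, to avoid the inconsistency $\gamma\Delta(\bar x) \neq \Delta(\bar x)$ when $\gamma\bar x = \bar x$, exactly as in Proposition~\ref{prop:equiv_iso}. Uniform equicontinuity of the $\Gamma$-action then guarantees that the resulting family $\{\Delta(\bar x)\}_{\bar x \in M^{q+1}}$ is still equicontinuous, with diameters controlled in terms of $\length(\bar x)$. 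An analogous equivariant choice is needed for the homotopies witnessing that $\pi_q(N) \to \pi_q(M)$ is equicontinuously surjective; these appear in the vanishing-of-coboundary argument of Theorem~\ref{thm:constr_loc_map}.

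The projection $\eta$ is handled similarly: pick orbit representatives in $M$, set $\eta$ at each representative to be any nearest point of $N$, and extend by $\eta(\gamma x) := \gamma \cdot \eta(x)$. Because the $\Gamma$-action is equirough and $N$ is $\Gamma$-invariant, the resulting (possibly discontinuous) map $\eta$ has the property that $d(x, \eta(x))$ is uniformly bounded on every $R$-neighbourhood of $N$, which is all one needs since $\theta \cap c$ is supported in such a neighbourhood. With these equivariant $\Delta$ and $\eta$ in hand, the formula $c \mapsto \eta_\ast(\theta \cap c)$ manifestly sends $\Gamma$-invariant uniformly finite chains to $\Gamma$-invariant uniformly finite chains, and the chain-level verification that it commutes with $\partial$ up to sign is word-for-word the one in Theorem~\ref{thm:constr_loc_map}; the Thom class $\theta$ is $\Gamma$-equivariant by the assumed $\Gamma$-equivariance of the embeddings $N \to \nu \to M$. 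Commutativity of the stated diagram then follows from the construction, just as in the non-equivariant case.

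The main obstacle is verifying that extending pointwise choices (of $\Delta$, of the equicontinuous null-homotopies, and of $\eta$) by the $\Gamma$-action preserves the equicontinuity, equirough, and uniform-finiteness estimates on which the non-equivariant argument depends. This is precisely why the hypotheses demand the $\Gamma$-action to be ulf in the sense of Definition~\ref{defn:ulf-action}: the uniform equicontinuity and equiroughness of the family $\{x \mapsto \gamma x\}_{\gamma \in \Gamma}$, together with uniform properness, allow one to pull all relevant bounds through the group action uniformly in $\gamma$, so that every estimate from Theorem~\ref{thm:constr_loc_map} survives the equivariantization.
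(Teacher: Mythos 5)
Your proposal is correct and follows essentially the same route as the paper's proof: equivariantize the filling map $\Delta$, the projection $\eta$, and the equicontinuous surjectivity homotopies via orbit representatives (using freeness and the ulf-action hypotheses to preserve the estimates), and then observe that the chain-level verification from Theorem~\ref{thm:constr_loc_map} carries over verbatim. The paper's proof is terser but makes precisely these three modifications, including the same observation that the equivariant $\eta$ is no longer an exact nearest-point map but differs from one only by a uniformly bounded amount, which suffices.
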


\begin{proof}
The construction of the map is analogous to the non-equivariant construction in the proof of Theorem \ref{thm:constr_loc_map}, using the idea from the above proof of Proposition \ref{prop:equiv_iso}.

Concretely, this means that we may assume the $\Delta$-operator used in the argument to be equivariant. Furthermore, the map $\eta\colon M \to N$ that we use in the construction must also be made $\Gamma$-equivariant: then a point $x \in M$ is not necessarily mapped anymore to a point $y \in N$ which minimizes the distance from $x$ to $N$, but the difference to such a map will be uniform in the distance $d(x,N)$ and this suffices for the argument. And last, using the idea from the above proof of Proposition \ref{prop:equiv_iso}, we can deduce that $\pi_q(N) \to \pi_q(M)$ can be improved to be equivariantly equicontinuously surjective (provided we are in the situation of Theorem \ref{thm:constr_loc_map} where we assume this).
\end{proof}

\subsection{Wrong way maps in homology of groups}\label{subsecn2sd2}

Let $\Gamma$ be a discrete group. We can equip it with a proper, left-invariant metric, which ensures that the action of $\Gamma$ on itself is ulf.

In what follows we will use simplicial models for $B\Gamma$, and on $E\Gamma$ we will use the induced simplicial model. We use the path metric induced from barycentric coordinates on the simplices (but we could also use other path metrics, e.g., the one derived from using the spherical metrics on the simplices).

\begin{defn}\label{defnjksdf23}
Let $\Gamma$ be finitely generated. We define
\[\RHulfG_\ast(E\Gamma) := \colim_{K \subset B\Gamma} \HulfG_\ast(\widetilde{K}) \text{ and } \RHufG_\ast(E\Gamma) := \colim_{K \subset B\Gamma} \HufG_\ast(\widetilde{K}),\]
where the colimit runs over all finite and connected $K \subset B\Gamma$ with $\pi_1(K) \twoheadrightarrow \pi_1(B\Gamma)$, and $\widetilde{K} \subset E\Gamma$ is the preimage of $K$.
\end{defn}

\begin{lem}
The above is well-defined, i.e., the groups are independent of the choice of model for $B\Gamma$.
\end{lem}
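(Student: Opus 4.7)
The plan is to show that any two simplicial models $B_1\Gamma$ and $B_2\Gamma$ produce canonically isomorphic directed systems, and hence isomorphic colimits. Both being $K(\Gamma,1)$-complexes, there exist simplicial maps $\bar f\colon B_1\Gamma \to B_2\Gamma$ and $\bar g\colon B_2\Gamma \to B_1\Gamma$ (possibly after a sufficiently fine subdivision) which are mutually homotopy inverse, together with cellular homotopies $\bar H\colon \bar g \bar f \simeq \id_{B_1\Gamma}$ and $\bar H'\colon \bar f \bar g \simeq \id_{B_2\Gamma}$. Lifting to $\Gamma$-equivariant maps $f,g$ and homotopies $H,H'$ between the universal covers, all four are Lipschitz (with bounded constants) because $\Gamma$ acts cocompactly with the path metrics coming from barycentric coordinates; in particular they are rough and equicontinuous. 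Passing to a subdivision does not affect the colimits, since the identity from a model to its subdivision is a $\Gamma$-equivariant map which is bi-Lipschitz on every cocompact piece.

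First I would define functorial maps on the directed systems. Given a finite connected $K_1 \subset B_1\Gamma$ with $\pi_1(K_1) \twoheadrightarrow \Gamma$, the image $\bar f(K_1)$ is compact in $B_2\Gamma$ and hence contained in a finite connected $K_2 \subset B_2\Gamma$, which we may enlarge to ensure $\pi_1(K_2) \twoheadrightarrow \Gamma$ (this surjectivity is automatic once $K_2$ contains the $\bar f$-images of a collection of loops in $K_1$ representing a generating set of $\Gamma$, since $\bar f_\ast$ is the identity on $\pi_1 = \Gamma$). The restriction of $f$ then sends $\widetilde{K_1}$ equivariantly into $\widetilde{K_2}$ and induces homomorphisms on $\HulfG_\ast$ and $\HufG_\ast$. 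These are compatible with enlarging $K_1$, so they assemble into a map $f_\ast$ between the respective colimits; the symmetric construction yields $g_\ast$.

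To see $g_\ast\circ f_\ast = \id$, given $K_1$ as above I would choose a finite connected $K_1' \subset B_1\Gamma$ containing $K_1 \cup \bar g(K_2) \cup \bar H(K_1\times [0,1])$ and still satisfying the $\pi_1$-surjectivity condition. The equivariant homotopy $H$ restricted to $\widetilde{K_1}$ takes values in $\widetilde{K_1'}$ and provides an equivariant prismatic chain homotopy between the composition $\widetilde{K_1}\to \widetilde{K_2}\to \widetilde{K_1'}$ and the structural inclusion $\widetilde{K_1}\hookrightarrow \widetilde{K_1'}$. Hence the two induced maps $\HulfG_\ast(\widetilde{K_1}) \to \HulfG_\ast(\widetilde{K_1'})$ coincide, the composition equals the colimit structure map, and so $g_\ast\circ f_\ast$ is the identity on the colimit; the same holds for the uf-version, and symmetrically $f_\ast\circ g_\ast = \id$.

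The hard part will be verifying that the continuous equivariant homotopy $H$ genuinely produces a $\Gamma$-equivariant chain homotopy admissible both for $\HulfG_\ast$ and $\HufG_\ast$. This reduces to the bookkeeping already present in the proofs of Propositions~\ref{prop:contr_iso} and~\ref{prop:equiv_iso}: the equicontinuity and uniform boundedness of $H$, both consequences of cocompactness, control the diameters of the prism cells and the multiplicities of the resulting simplex family, which is precisely what one needs for a ulf-chain homotopy, and passing to vertex tuples transports this admissibility to the uf-level. Beyond this verification, the argument is purely formal cofinality in the directed system.
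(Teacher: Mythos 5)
Your proposal follows essentially the same route as the paper's proof: subdivision invariance, simplicial approximation to produce mutually inverse (up to homotopy) simplicial maps, lifting to equivariant maps between the universal covers, and using the restricted homotopies on finite subcomplexes to show the induced colimit maps are inverse isomorphisms; you simply supply the cofinality bookkeeping the paper leaves implicit. One small imprecision worth flagging: the claim that $f,g,H,H'$ are all globally Lipschitz ``because $\Gamma$ acts cocompactly'' is misstated --- $\Gamma$ does not act cocompactly on $E\Gamma$ when $B\Gamma$ is infinite; what is true is that simplicial maps have Lipschitz constant $1$ by construction, while the homotopies are only controlled once restricted to the lifts $\widetilde{K}$ of finite subcomplexes (where the $\Gamma$-action \emph{is} cocompact), which is exactly how both you and the paper actually use them later.
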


\begin{proof}
First note that passing to a subdivision of one simplicial model for $B\Gamma$ does not change the groups.

Given two models $B\Gamma$ and $B\Gamma^\prime$, we can by the above (together with simplicial approximation) assume that there are simplicial maps $B\Gamma \to B\Gamma^\prime$ and the other way round, such that the corresponding compositions are homotopic to the corresponding identity maps.

Simplicial maps are Lipschitz with Lipschitz constant $1$, so they will intertwine the corresponding colimit systems. Moreover, the homotopies to the identities are uniformly continuous if restricted to finite sub-complexes, and so the result follows.
\end{proof}

\begin{prop}\label{prop:homology_BG}
Let $\Gamma$ be a finitely generated, discrete group.

Then we have an isomorphism $H_\ast(B\Gamma) \xrightarrow{\cong} \HufG_\ast(|\Gamma|)$, where $|\Gamma|$ denotes the group $\Gamma$ endowed with any proper, left-invariant metric.
\end{prop}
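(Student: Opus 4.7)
The plan is to factor the desired isomorphism through the restricted groups of Definition~\ref{defnjksdf23} by establishing three identifications:
\begin{enumerate}
\item $H_\ast(B\Gamma) \cong \RHulfG_\ast(E\Gamma)$,
\item $\RHulfG_\ast(E\Gamma) \cong \RHufG_\ast(E\Gamma)$,
\item $\RHufG_\ast(E\Gamma) \cong \HufG_\ast(|\Gamma|)$,
\end{enumerate}
whose composition is the claimed isomorphism.

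For (1), I would fix a finite connected $K \subset B\Gamma$ with $\pi_1(K) \twoheadrightarrow \Gamma$; its preimage $\widetilde K \subset E\Gamma$ then carries a free, cocompact $\Gamma$-action. A $\Gamma$-equivariant uniformly locally finite chain on $\widetilde K$ is determined by its finitely many $\Gamma$-orbits of simplices meeting a fundamental domain, so the quotient map gives a chain-level identification $\HulfG_\ast(\widetilde K) \cong H_\ast(K)$. Since $\Gamma$ is finitely generated, the subsystem of such $K$ is cofinal among all finite subcomplexes of $B\Gamma$ (adjoin a wedge of loops representing a finite generating set to enforce $\pi_1$-surjectivity), so passing to the colimit yields $\RHulfG_\ast(E\Gamma) \cong H_\ast(B\Gamma)$. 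For (3), fix a basepoint $x_0 \in E\Gamma$. Cocompactness of $\Gamma \curvearrowright \widetilde K$ makes the orbit $\Gamma \cdot x_0$ coarsely dense in $\widetilde K$, so the $\Gamma$-equivariant inclusion $\Gamma \cdot x_0 \hookrightarrow \widetilde K$ is a coarse equivalence inducing $\HufG_\ast(\widetilde K) \cong \HufG_\ast(\Gamma \cdot x_0)$. The restricted metric on $\Gamma \cdot x_0$ is a proper, left-invariant metric on $\Gamma$, hence equivariantly quasi-isometric to $|\Gamma|$; this identification is natural in $K$, so the colimit stabilizes at $\HufG_\ast(|\Gamma|)$.

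For (2), the core of the argument, I would port the proof of Proposition~\ref{prop:equiv_iso} into the colimit. Going forward, the natural map $\HulfG_\ast(\widetilde K) \to \HufG_\ast(\widetilde K)$ sends each simplex to its tuple of vertices. For the reverse direction, a $\Gamma$-equivariant uniformly finite chain $c = \sum a_{\bar x} \bar x$ on $\widetilde K$ has, by cocompactness together with the uniform bound on the diameter of tuples $\bar x$ with $a_{\bar x} \ne 0$, only finitely many $\Gamma$-orbits of nonzero coefficients. For one representative from each orbit I would inductively fill in a simplex $\Delta(\bar x)\colon \Delta^k \to E\Gamma$ using contractibility of $E\Gamma$, exactly as in the proof of Proposition~\ref{prop:contr_iso}. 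The finitely many resulting simplices land in a finite subcomplex of $B\Gamma$, so enlarging $K$ to some $K' \supset K$ in the colimit system absorbs them; extending $\Gamma$-equivariantly (as in Proposition~\ref{prop:equiv_iso}, and using freeness of the action) produces a ulf chain $\Delta(c) \in \HulfG_\ast(\widetilde{K'})$ mapping to $c$. As in Proposition~\ref{prop:contr_iso}, $\iota \circ \Delta$ equals the identity on chains while $\Delta \circ \iota$ is chain-homotopic to the identity through a ulf chain homotopy built by the same inductive filling procedure, possibly after a further enlargement of $K$ to accommodate the prism simplices.

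The main obstacle — and the very reason Definition~\ref{defnjksdf23} takes a colimit — is step (2): an individual $\widetilde K$ is typically not equicontinuously contractible in its barycentric path metric (links of vertices can be arbitrarily large as $K$ shrinks, and higher homotopies may require unboundedly large fillings), so Proposition~\ref{prop:equiv_iso} cannot be applied to any single $\widetilde K$. The substantive point is that for any \emph{one} $\Gamma$-equivariant uniformly finite chain only finitely many $\Gamma$-orbits of fillings are ever needed, and these automatically fit into some larger $\widetilde{K'}$ living inside the contractible ambient space $E\Gamma$.
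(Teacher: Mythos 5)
Your proposal is correct and follows the same three-step decomposition as the paper's own proof: identify $H_\ast(B\Gamma)$ with $\RHulfG_\ast(E\Gamma)$ by lifting, pass to $\RHufG_\ast(E\Gamma)$ by filling in finitely many $\Gamma$-orbits of simplices after enlarging $K$ within the colimit, and then identify with $\HufG_\ast(|\Gamma|)$ via the coarse equivalence $\Gamma\cdot x_0 \hookrightarrow \widetilde K$. Your remark that an individual $\widetilde K$ need not be equicontinuously contractible — so that Proposition~\ref{prop:equiv_iso} cannot be applied directly to one $\widetilde K$ and the colimit is essential — makes explicit the point the paper's step (2) relies on, and matches its intent.
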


\begin{proof}
The lifting map $H_\ast(B\Gamma) \to \RHulfG_\ast(E\Gamma)$ is an isomorphism. Its inverse map is given in the following way: for $K \subset B\Gamma$ finite we choose a single simplex in a $\Gamma$-orbit $\Gamma\sigma \subset \widetilde{K}$ and push this simplex down to $K$ by the covering map $\widetilde{K} \to K$.

For any $K \subset B\Gamma$ and any equicontinuous collection of maps $\{S^i \to \widetilde{K}\}$ there exists a $K^\prime \subset B\Gamma$ such that this collection of maps is equicontinuously contractible in $\widetilde{K^\prime}$. Therefore, with a similar argument as used to prove Proposition~\ref{prop:equiv_iso}, we get an isomorphism $\RHulfG_\ast(E\Gamma) \to \RHufG_\ast(E\Gamma)$

Choosing a point in $E\Gamma$ and translating it by deck transformations we get an equivariant rough embedding of $|\Gamma|$ into $E\Gamma$. For $K \subset B\Gamma$, the lift $\widetilde{K}$ will contain this orbit for large enough $K$, but now the equivariant rough embedding $|\Gamma| \to \widetilde{K}$ is a rough equivalence (an inverse map is given by crushing a fundamental domain of $\widetilde{K}$ to a single point). So we get an isomorphism $\RHufG_\ast(E\Gamma) \to \HufG_\ast(|\Gamma|)$.

Composing everything gives the required isomorphism $H_\ast(B\Gamma) \to \HufG_\ast(|\Gamma|)$.
\end{proof}

\begin{rem}
For a compact manifold $M$ we can write the map $H_\ast(M) \to H_\ast(B\pi_1 M)$ induced by the classifying map as the composition (let us write $\Gamma := \pi_1 M$)
\begin{equation}
\label{eqrwefll222}
H_\ast(M) \stackrel{\cong}\longrightarrow \HulfG_\ast(\widetilde{M}) \longrightarrow \HufG_\ast(\widetilde{M}) \stackrel{\cong}\longleftarrow \HufG_\ast(|\Gamma|) \stackrel{\cong}\longleftarrow H_\ast(B \Gamma).
\end{equation}
The first map in the composition is given by lifting a simplex in $M$ to its orbit of simplices in the universal cover $\widetilde{M}$ of $M$, the second map is coarsification, the third map is given by choosing a single point $x_0 \in \widetilde{M}$ and translating it by $\Gamma$ (note that $\Gamma x_0$ is equivariantly coarsely equivalent to $\widetilde{M}$, if we equip $\widetilde{M}$ with the pull-back metric of any metric on $M$), and the last map in the composition is the isomorphism from Proposition~\ref{prop:homology_BG}.
\end{rem}

\begin{thm}\label{thm:loc_BG}
Let $M$ be a closed, connected manifold and let $N \hookrightarrow M$ be a connected submanifold of codimension $q \ge 1$ such that the inclusion induces an injective map on fundamental groups $\pi_1(N) \hookrightarrow \pi_1(M)$.

Assume further one of the following:
\begin{enumerate}
\item That the normal bundle $\nu$ of the embedding $N \hookrightarrow M$ is oriented\footnote{With respect to the coefficient group of the homology groups.} and, in the case $q \ge 2$, that $\pi_i(M) = 0$ for $2 \le i \le q$.
\item That the normal bundle $\nu$ of the embedding $N \hookrightarrow M$ is trivial.

Further, in the case $q = 2$ that $\pi_2(N) \to \pi_2(M)$ is surjective, and in the case $q > 2$ that $\pi_i(M) = 0$ for $2 \le i \le q-1$ and that $\pi_q(N) \to \pi_q(M)$ is surjective.
\end{enumerate}

Then we can construct a homomorphism $H_\ast(B\pi_1 M) \to H_{\ast-q}(B\pi_1 N)$ such that the following diagram commutes:
\[\xymatrix{
H_\ast(M) \ar[r] \ar[d] & H_\ast(D\nu, S\nu) \ar[r] & H_{\ast-q}(N) \ar[d]\\
H_\ast(B\pi_1 M) \ar[rr] & & H_{\ast-q}(B\pi_1 N)}\]
where the map $H_\ast(M) \to H_\ast(D\nu, S\nu)$ is the Thom--Pontryagin collapse, and the map $H_\ast(D\nu, S\nu) \to H_{\ast-q}(N)$ is cap-product with the Thom class of $\nu$. The vertical maps are induced by the classifying maps.
\end{thm}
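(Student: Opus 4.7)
The plan is to lift the whole geometric picture to the universal cover $\tilde M$ and apply the equivariant rough wrong way map (Theorem~\ref{thm:equiv_loc}) with respect to the deck transformation action $\Gamma := \pi_1(M)$; then, via Proposition~\ref{prop:homology_BG} and its analogue for the submanifold, translate both ends of the resulting arrow into ordinary group homology.

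\textbf{Step 1 (setup in the universal cover).} Let $\Lambda := \pi_1(N)$, let $\pi\colon \tilde M \to M$ be the universal cover with the pulled-back Riemannian metric, and choose a connected component $\bar N$ of $\pi^{-1}(N)$. Because $\Lambda \hookrightarrow \Gamma$ is injective, $\bar N$ is a universal cover of $N$, its $\Gamma$-stabilizer is exactly $\Lambda$, and the full preimage decomposes $\Gamma$-equivariantly as the disjoint union $\pi^{-1}(N) = \bigsqcup_{g\Lambda \in \Gamma/\Lambda} g\bar N$. The submanifold $\pi^{-1}(N) \subset \tilde M$ is $\Gamma$-invariant and, by the Examples after Definition~\ref{defn:roughly_geodesic} applied to each component, is a ulf-submanifold with a uniformly thick, oriented normal bundle inherited from $\nu$.

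\textbf{Step 2 (verifying hypotheses of Theorem~\ref{thm:equiv_loc}).} The $\Gamma$-action on $\tilde M$ is free, isometric, and ulf (Example~\ref{ex:ulf_action}). The hypothesis $\pi_i(M) = 0$ for $2 \le i \le q-1$ (vacuously so when $q = 2$), combined with simple connectedness of $\tilde M$, yields equicontinuous $(q-1)$-connectedness of $\tilde M$ by Example~\ref{exjnksd23}. For the equicontinuous surjectivity of $\pi_q(\pi^{-1}(N)) \to \pi_q(\tilde M)$, note that for $q \ge 2$ we have $\pi_q(\bar N) = \pi_q(N)$ and $\pi_q(\tilde M) = \pi_q(M)$, so surjectivity is the standing assumption; equicontinuity follows from compactness of $M$ and $N$, which reduces the problem to finitely many generators and a uniform family of bounded homotopies, which we then propagate by the $\Gamma$-action as in the proof of Proposition~\ref{prop:equiv_iso}.

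\textbf{Step 3 (translate to group homology).} Theorem~\ref{thm:equiv_loc} produces a map $\HufG_\ast(\tilde M) \to \HufG_{\ast-q}(\pi^{-1}(N))$ that fits into the equivariant analogue of the main diagram. The splitting $\pi^{-1}(N) \cong \Gamma \times_\Lambda \bar N$ gives a canonical identification $\HufG_{\ast-q}(\pi^{-1}(N)) \cong \HufL_{\ast-q}(\bar N)$: a $\Gamma$-equivariant uniformly finite chain on the disjoint union is uniquely determined by its restriction to $\bar N$, which is a $\Lambda$-equivariant uniformly finite chain. Proposition~\ref{prop:homology_BG}, together with the equivariant rough equivalences $\tilde M \simeq |\Gamma|$ and $\bar N \simeq |\Lambda|$ obtained by picking orbits of basepoints, then yields $\HufG_\ast(\tilde M) \cong H_\ast(B\Gamma)$ and $\HufL_{\ast-q}(\bar N) \cong H_{\ast-q}(B\Lambda)$. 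Composing, we obtain the desired map $H_\ast(B\pi_1 M) \to H_{\ast-q}(B\pi_1 N)$.

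\textbf{Commutativity and the main difficulty.} Commutativity of the final diagram is checked by pasting the commutative square from Theorem~\ref{thm:equiv_loc} between two copies of the zig-zag~\eqref{eqrwefll222}: the top horizontal $H_\ast(M) \to H_\ast(D\nu,S\nu) \to H_{\ast-q}(N)$ is exactly the ulf-level wrong way map after the identifications $H_\ast(M) \cong \HulfG_\ast(\tilde M)$ and $H_{\ast-q}(N) \cong \HulfL_{\ast-q}(\bar N)$, and the vertical classifying maps correspond on both sides to composing with coarsification, which is the operation Theorem~\ref{thm:equiv_loc} intertwines. I expect the main obstacle to lie in Step~2, specifically in the $q=2$ case where $\pi_2(M)$ need not vanish: one must convert the algebraic surjectivity of $\pi_2(N) \to \pi_2(M)$ into a genuinely $\Gamma$-equivariant equicontinuous choice of nullhomotopies into $\pi^{-1}(N)$ for arbitrary equicontinuous families of 2-spheres in a neighbourhood of $\pi^{-1}(N)$; compactness of $M$ reduces this to finitely many homotopy classes but the equivariant spreading still requires some care.
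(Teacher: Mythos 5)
Your overall plan — lift to the universal cover, run the equivariant rough wrong way map, translate back to group homology via Proposition~\ref{prop:homology_BG} — is the right plan, and Steps~1 and~2 and the commutativity discussion are essentially correct. But the key move in Step~3 differs from the paper's and contains a genuine gap.

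You apply Theorem~\ref{thm:equiv_loc} with the full group $\Gamma = \pi_1(M)$ acting on the \emph{entire} preimage $\pi^{-1}(N) = \bigsqcup_{g\Lambda \in \Gamma/\Lambda} g\bar N$, and then try to descend to $\bar N$ by a Shapiro-type identification $\HufG_{\ast-q}(\pi^{-1}(N)) \cong \HufL_{\ast-q}(\bar N)$. This runs into two concrete problems. First, Theorem~\ref{thm:equiv_loc} inherits from Theorem~\ref{thm:constr_loc_map} the hypothesis that the submanifold is a ulf-submanifold in the sense of Definition~\ref{defn:roughly_geodesic}, which is only formulated for \emph{connected} submanifolds, and for good reason: the two metrics $d_h$ (intrinsic) and $d_g|_{N\times N}$ (subspace) must be mutually rough. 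For the disjoint union $\pi^{-1}(N)$ this fails outright — distinct components are at finite $d_g$-distance but infinite $d_h$-distance, so $\id\colon (\pi^{-1}(N), d_g) \to (\pi^{-1}(N), d_h)$ is not rough. Second, the claim that a $\Gamma$-equivariant uniformly finite chain on the disjoint union ``is uniquely determined by its restriction to $\bar N$'' is false as stated: uniformly finite chains of large enough propagation contain tuples whose entries lie in distinct components $g_1\bar N \neq g_2\bar N$, and no element of $\Gamma$ translates such a tuple into $\bar N^{n+1}$, so the restriction map misses them; moreover the restriction of a cycle need not be a cycle, because a spanning tuple can have faces lying entirely in $\bar N$. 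So the Shapiro isomorphism you invoke is not established and is not a formality.

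The paper sidesteps both difficulties by never looking at the whole preimage: instead of inducing up and back down, it \emph{forgets part of the equivariance}, restricting from $\Gamma$ to the subgroup $\Lambda = \pi_1(N)$, which still acts freely and ulf on $X$, on the single connected component $\bar N$, and on its normal bundle. Theorem~\ref{thm:equiv_loc} is then applied with the group $\Lambda$ (not $\Gamma$), producing the dashed arrow $\HufL_\ast(X) \to \HufL_{\ast-q}(\bar N)$, and the left vertical chain of the final diagram factors through the forgetful map $\HufG_\ast(X) \to \HufL_\ast(X)$. Since $\bar N$ is connected and the $\Lambda$-action is cocompact on $\bar N$ (though not on $X$, which is harmless), all ulf-submanifold hypotheses are satisfied as stated, and no Shapiro-type lemma is needed. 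To repair your argument you would either need to prove the uniformly-finite-homology Shapiro lemma for $\Gamma \times_\Lambda \bar N$ and extend Definition~\ref{defn:roughly_geodesic} and Theorem~\ref{thm:equiv_loc} to non-connected submanifolds, or simply switch to the paper's ``forget to $\Lambda$'' move, which is shorter and avoids both issues.
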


\begin{proof}
Let us write for simplicity of notation $\pi_1(M) = \Gamma$ and $\pi_1(N) = \Lambda$. Equip $M$ with any Riemannian metric and a universal cover $X$ of $M$ with the pull-back metric. The preimage of $N$ under the covering projection $X \to M$ will be a disjoint union of copies of the universal cover of $N$, because we assume that $\Lambda \to \Gamma$ is injective. Let $\bar N \subset X$ be one of these copies and equip it with the induced Riemannian metric. Then $\bar N$ will be a ulf-submanifold and its normal bundle $\bar\nu$ will be uniformly thick. The action of the subgroup $\Lambda \subset \Gamma$ will be ulf and free on $\bar N$, $X$ and $\bar\nu$, and the embeddings $\bar N \to \bar\nu$ and $\bar\nu \to X$ will be $\Lambda$-equivariant. This gives us equivariant Thom--Pontryagin collapse maps $\HulfL_\ast(X) \to \HulfL_\ast(D\bar\nu, S\bar\nu) \to \HulfL_{\ast-q}(\bar N)$.

Let us take a look at the following diagram summarizing what we have discussed up to now. Keep in mind the maps in Equation~\eqref{eqrwefll222}, note that the dotted arrow is not yet constructed, and the horizontal maps $\HulfG_\ast(X) \to \HulfL_\ast(X)$ and $\HufG_\ast(X) \to \HufL_\ast(X)$ are given by forgetting part of the equivariance:

\[\xymatrix{
& H_\ast(M) \ar[r] \ar[dl]_\cong & H_\ast(D\nu, S\nu) \ar[d]^\cong \ar[r] & H_{\ast-q}(N) \ar[d]^\cong\\
\HulfG_\ast(X) \ar[d] \ar[r] & \HulfL_\ast(X) \ar[d] \ar[r] & \HulfL_\ast(D\bar\nu, S\bar\nu) \ar[r] & \HulfL_{\ast-q}(\bar N) \ar[d]\\
\HufG_\ast(X) \ar[r] & \HufL_\ast(X) \ar@{-->}[rr] & & \HufL_{\ast-q}(\bar N)\\
\HufG_\ast(|\Gamma|) \ar[u]^\cong & & & \HufL_{\ast-q}(|\Lambda|) \ar[u]_\cong
}\]

In view of Proposition~\ref{prop:homology_BG} it remains to show that the dotted arrow exists and makes the diagram commute. To accomplish this we will apply Theorem~\ref{thm:equiv_loc}, which means that we have to verify its assumptions. We have already discussed most of these assumptions in the first paragraph of this proof, and the equicontinuous $(q-1)$-connectedness, resp.~the $q$-connectedness, of $X$ was discussed in Example~\ref{exjnksd23}. And last, in the case where we assume that $\pi_q(N) \to \pi_q(M)$ is surjective, we conclude that the map $\pi_q(\bar N) \to \pi_q(X)$ is equicontinuously surjective. So we can apply Theorem~\ref{thm:equiv_loc}, which finishes this proof.
\end{proof}

\begin{rem}
If for some $q^\prime \ge q$ one of the assumptions in Theorem~\ref{thm:loc_BG} is satisfied for $q^\prime$ instead of $q$, an equivariant version of Remark~\ref{rem:more_maps} gives us that we will get a map $H_\ast(B\pi_1 M) \to H_{\ast-q^\prime}(B\pi_1 N)$ for any cohomology class of degree~$q^\prime$ of the pair $(D\nu, S\nu)$. In Theorem~\ref{thm:loc_BG} we use this cohomology class for the cap product $H_\ast(D\nu, S\nu) \to H_{\ast - q}(N)$ instead of the Thom class.
\end{rem}

\bibliography{./Bibliography_Wrong_way_homology_arXiv}
\bibliographystyle{amsalpha}

\end{document}